\documentclass[reqno, 11pt]{amsart}
\usepackage{amsmath,amsfonts,amssymb,amsthm}
\usepackage{mathtools}
\usepackage{color}
\usepackage{geometry}
\usepackage{enumitem}
\usepackage[colorlinks]{hyperref}

\newtheorem{Question}{Question}
\newtheorem{lem}{Lemma}[section]
\newtheorem{teo}[lem]{Theorem}
\newtheorem{pro}[lem]{Proposition}
\newtheorem{cor}[lem]{Corollary}

\newtheorem*{rem*}{Remark}
\newtheorem*{teo*}{Theorem}

\theoremstyle{definition}
\newtheorem{definition}[lem]{Definition}

\theoremstyle{remark}
\newtheorem{rem}[lem]{Remark}

\newcounter{claimcounter}
\numberwithin{claimcounter}{lem}

\usepackage{scalerel,stackengine}
\stackMath
\newcommand\reallywidehat[1]{%
	\savestack{\tmpbox}{\stretchto{%
			\scaleto{%
				\scalerel*[\widthof{\ensuremath{#1}}]{\kern-.6pt\bigwedge\kern-.6pt}%
				{\rule[-\textheight/2]{1ex}{\textheight}}%WIDTH-LIMITED BIG WEDGE
			}{\textheight}% 
		}{0.5ex}}%
	\stackon[1pt]{#1}{\tmpbox}%
}

\DeclareMathOperator{\Id}{Id}

\DeclareMathOperator{\lcm}{lcm}

\DeclareMathOperator{\im}{Im}

\DeclareMathOperator{\Hom}{Hom}

\DeclareMathOperator{\Sur}{Epi}

\DeclareMathOperator{\Aut}{Aut}
\DeclareMathOperator{\Ann}{Ann}

\DeclareMathOperator{\iid}{id}

\DeclareMathOperator{\Tor}{Tor}

\newcommand{\Z}{\mathbb{Z}}
\newcommand{\F}{\mathbb{F}}

\newcommand{\N}{\mathbb{N}}

\newcommand{\G}{\mathcal{G}}

\newcommand{\todo}[1]{}
\renewcommand{\todo}[1]{{\color{red} TODO: {#1}}}

\title{Free factors and  profinite completions}

\author{Alejandra Garrido}

\author{Andrei Jaikin-Zapirain}
 
\address{Departamento de Matem\'aticas, Universidad Aut\'onoma de Madrid \and  Instituto de Ciencias Matem\'aticas, CSIC-UAM-UC3M-UCM}
\email{alejandra.garrido@uam.es}
\email{andrei.jaikin@uam.es}

\begin{document}

\begin{abstract}
	Can one detect free products of groups via their profinite completions? We answer positively among virtually free groups.
	 More precisely, we prove that a subgroup of a finitely generated virtually free group $G$ is a free factor if and only if its closure in the profinite completion of $G$ is a profinite free factor.
	This generalises results by Parzanchevski and Puder for free groups that were  later also proved by Wilton. 
	Our methods are entirely different to theirs, combining homological properties of profinite groups and the decomposition theory of Dicks and Dunwoody.
%	\todo{put this elsewhere in paper (not so optimist)}{\color{red}This points to a theoretical framework within which the general question above may be tackled.  }

% Let $F$ be a finitely generated  free group. Puder and Parzanchevski proved that  the values of a  tuple $(w_1,\ldots, w_k)$ of elements of $F$  are uniformly distributed in any finite group if and only if   $(w_1,\ldots, w_k)$ form part of a free generating tuple of $F$. Equivalently,  if $H$ is a finitely generated subgroup of $F$, then  $H$ is a free factor of $F$ if and only if  the closure $\overline H$ of $H$ in $\widehat F$ is a free profinite factor of $\widehat F$. In this paper we show that the same claim holds also for finitely generated  virtually free groups. %The proof uses the theory of ends introduced by Stallings and 
%

%Let $G$ be a finitely generated virtually free group. We show that if $H$ is a finitely generated subgroup of $G$, then $H$ is a free factor of $G$ if and only if the closure $\overline H$ of $H$ in the profinite completion $\widehat G$  of $G$ is a free profinite factor of $\widehat G$. This generalizes  a result of Puder and Parzanchevski, who proved this statement in the free case. 
\end{abstract}

\maketitle

\section{Introduction}
One of the most natural and successful ways to study an infinite object is via finite approximations to it. 
A natural way to approximate an infinite group is to consider larger and larger finite quotients of it. 
The object that encodes all finite quotients of a group $G$ is its profinite completion $\widehat{G}$. Indeed, this is the categorical limit of all finite quotients and the natural maps between them. 
In order for $\widehat{G}$ to be a faithful approximation to $G$, the latter must be residually finite, in which case $G$ embeds in $\widehat{G}$.
We restrict ourselves to the study of these groups. 

One then may wonder to what extent a residually finite group is determined by its profinite completion. 
The best possible answer is known as \emph{absolute profinite rigidity}: if $\widehat{G}\cong \widehat{H}$ then $G\cong H$. 
It is straightforward to see that finite groups and finitely generated abelian groups are absolutely profinitely rigid. 
Beyond that, things get difficult very quickly. 
There are examples of virtually cyclic groups  (\cite{Baumslag}) and of  finitely generated torsion-free nilpotent groups of class 2 (\cite{GrunewaldScharlau}) that are not absolutely profinitely rigid. 

Perhaps the boldest question in this area is that posed by Remeslennikov (\cite[Problem 5.48]{Kourovka}): \emph{Are free non-abelian groups absolutely profinitely rigid?
}

The answer to this question still seems remote, given current knowledge.  
Nevertheless, some progress has been made on variants of this question: free groups and surface groups are determined, among limit groups,  by their profinite completion (\cite[Corollary D]{Wi18}, \cite[Theorem 2]{Wilton_comptes}); certain %(non-free)
 Kleinian groups and triangle groups are absolutely profinitely rigid (\cite{BMRS18, BMRS20}).

A natural way to attack Remeslennikov's question is to ask whether one can detect a free factor of a group from its profinite completion.
It is an exercise in the definition of free profinite products (\cite[9.1.1]{RibesZalesskii}) to show that if $G$ is the free product of subgroups $H, K$ then $\widehat{G}$ is the free profinite product of the closures $\overline{H},\overline{K}$. 
One might hope that the converse is also true:

\begin{Question}\label{qu:recognise_free_factors_profinitely}
	Let $G$ be a finitely generated residually finite group and $H\leq G$. 
	If $\overline{H}$ is a free profinite factor of $\widehat{G}$, must $H$ be a free factor of $G$?
\end{Question}

This question is quite tricky.
The only results so far answering Question \ref{qu:recognise_free_factors_profinitely} require $G$ to be a free group, and then the answer is ``yes''. 
This case was first shown by Parzanchevski and Puder \cite{PP15} and later, using different methods, by  Wilton \cite{Wi18}. 
%
%\todo{Puder Parzan did it first, Wilton reproved (rephrase so it is clear)}
%
Each of the two proofs rely on beautiful and deep results. 
The first uses detailed calculations for the expected  number of  common fixed points of the image of a  finitely generated $H$ under a uniformly random homomorphism $G\rightarrow \mathrm{Sym}(n)$, for each $n$.   
In particular, Puder and Parzanchevski obtain that if $w\in G$ is not a primitive word then, for large enough $n$, the average number of fixed points of the image of $w$ under a uniformly random   homomorphism $G\rightarrow \mathrm{Sym}(n)$ is strictly larger than $1$. 
Wilton's result is a consequence of another result of his that  partially answers a question of Gromov: if the fundamental group of a non-trivial graph of free groups with
cyclic edge groups is one-ended and hyperbolic then it contains a  surface subgroup. 
%Given a finitely generated  subgroup $H$ of a finitely generated free group $G$, Wilton shows that if $\overline{H}$ is a free factor of $\hat{G}$ then the double $G\ast_{H}G$ is either free or contains a surface group, and finds a contradiction if the latter holds, using that surface groups have nontrivial second cohomology which is preserved when passing to profinite completions.   

Here we use entirely different methods to answer Question \ref{qu:recognise_free_factors_profinitely} positively when $G$ is virtually free. 

We need some notation.
Let $H$ be a subgroup of a finitely generated group $G$ and $\gamma:H\rightarrow P$ a homomorphism to a finite group $P$. 
If $H$ is a free factor of $G$, then the number $h(G,H,\gamma,P)$ of homomorphisms $\tilde{\gamma}:G\rightarrow P$ that extend  $\gamma$  is independent of the choice of $\gamma$; it is simply the number of homomorphisms from the free complement of $H$ to $P$. 
It turns out that the converse is also true when $G$ is virtually free.

\begin{teo}\label{teo:vfree}
	Let $G$ be a finitely generated virtually free group. Let $H$ be a finitely generated   subgroup of $G$ and $\overline H$ the closure of $H$ in  $\widehat G$. 
	The following are equivalent:
	
	\begin{enumerate}[label=\textnormal{(\alph*)}]
		\item $H$ is a free factor of $G$;
		\item for every  finite group $P$, the number $h(G,H,\gamma,P)$ is constant on  $\Hom (H, P)$;
		\item    $\overline H$ is a free profinite factor of $\widehat G$.
		
	\end{enumerate}
\end{teo}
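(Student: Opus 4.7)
The plan is to establish the cycle (a) $\Rightarrow$ (c) $\Rightarrow$ (b) $\Rightarrow$ (a). The first two implications are direct consequences of the universal properties involved. For (a) $\Rightarrow$ (c): finite quotients of $H \ast K$ are in natural bijection with compatible pairs of finite quotients of $H$ and $K$, so profinite completion of finitely generated groups commutes with free products, giving $\widehat{G} = \overline{H} \amalg \overline{K}$ as a free profinite product. For (c) $\Rightarrow$ (b): writing $\widehat{G} = \overline{H} \amalg Q$, the universal property of the profinite coproduct says that every continuous $\bar\gamma:\overline{H}\to P$ admits exactly $|\Hom_{\mathrm{cts}}(Q,P)|$ continuous extensions to $\widehat{G}\to P$. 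Using the bijection between $\Hom(G,P)$ and $\Hom_{\mathrm{cts}}(\widehat{G},P)$, and the fact that for virtually free $G$ every $\gamma:H\to P$ factors through $\overline{H}$ (by the subgroup separability of virtually free groups, which gives $\overline{H}\cong\widehat{H}$), one obtains $h(G,H,\gamma,P)=|\Hom_{\mathrm{cts}}(Q,P)|$, independent of $\gamma$.

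The main content is (b) $\Rightarrow$ (a). By the Karrass--Pietrowski--Solitar theorem, a consequence of the Dicks--Dunwoody decomposition theory, a finitely generated virtually free group $G$ is the fundamental group of a finite graph of finite groups: $G = \pi_1(\mathcal{G},X)$ with Bass--Serre tree $T$. The finitely generated subgroup $H$ acts on $T$, and by Bass--Serre theory is itself the fundamental group of a finite graph of finite groups $H = \pi_1(\mathcal{H},Y)$ whose vertex and edge groups are intersections of $H$ with $G$-conjugates of those of $\mathcal{G}$. The goal is to show, using (b), that after a suitable refinement every finite vertex group of $\mathcal{G}$ appears, up to $G$-conjugacy, as a vertex group of $\mathcal{H}$, and that the ``leftover'' edges in $\mathcal{G}$ relative to $\mathcal{H}$ have trivial edge groups, so that the induced map of graphs of groups exhibits $G$ as $H\ast F$ for a free group $F$.

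To carry this out, I would translate (b) into the profinite statement that the restriction $\Hom_{\mathrm{cts}}(\widehat{G},P)\to\Hom_{\mathrm{cts}}(\overline{H},P)$ is surjective with fibers of constant cardinality. Surjectivity forces every finite vertex stabilizer of $G$ in $T$ to be conjugate into $H$: otherwise one constructs a representation of such a vertex stabilizer that cannot be extended compatibly across $H$. The constant-fiber condition, combined with the graph-of-groups counting formulas for $|\Hom(G,P)|$ and $|\Hom(H,P)|$ (a variant of an Euler-characteristic identity), forces the fiber size to be $|P|^r$ for a fixed integer $r$, and hence the kernel $Q$ of the profinite retraction $\widehat{G}\to\overline{H}$ is free profinite of rank $r$. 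The main obstacle is the descent from this profinite free complement $Q$ to an honest discrete free complement $F\leq G$; this step would use the strong subgroup separability of virtually free groups (Hall's theorem together with its refinements for torsion) to realize the generators of $F$ as elements of $G$ rather than merely of $\widehat{G}$, concluding $G = H\ast F$.
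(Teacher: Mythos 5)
Your implications (a)$\Rightarrow$(c) and (c)$\Rightarrow$(b) are essentially right and agree in substance with the paper, which routes them through Proposition \ref{prop:profinitefactor} together with the fact that virtually free groups are LERF (so $\overline H\cong\widehat H$ and $\Hom(H,P)\cong\Hom_{\mathrm{cont}}(\overline H,P)$). The problem is (b)$\Rightarrow$(a), which is the entire content of the theorem, and your sketch of it has two genuine gaps. First, the claim that surjectivity of the restriction maps $\Hom(G,P)\to\Hom(H,P)$ forces every finite vertex stabiliser to be conjugate into $H$ is unsupported and, as stated, false: for $G=\Z/2\times\Z$ and $H=\Z$ the restriction map is surjective for every $P$ (extend $\gamma$ by killing the $\Z/2$ factor), yet the vertex stabilisers $\Z/2$ of the Bass--Serre tree are not conjugate into the torsion-free $H$, and $H$ is not a free factor. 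One genuinely needs the constancy of the fibres, and turning that numerical hypothesis into group-theoretic structure requires a concrete mechanism, which your outline does not supply.

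Second, and more seriously, the step you yourself flag as ``the main obstacle'' --- descending from a profinite free complement $Q\leq\widehat G$ to a discrete free complement $F\leq G$ --- cannot be carried out by subgroup separability. Separability tells you which subgroups of $G$ are closed in $\widehat G$; it gives no way to replace topological generators of $Q$ by elements of $G$ generating an abstract complement, and the paper explicitly warns (see the discussion around Question \ref{ZQ}) that there is no Grushko--Neumann theorem for profinite groups, so a profinite splitting $\widehat G=\overline H\coprod Q$ cannot a priori be matched with any discrete splitting. This descent is precisely what the bulk of the paper is devoted to: from the profinite splitting one extracts, via Lemma \ref{lem:freeprofprod_implies_idealsdecompose}, a derivation $d$ on $G$ with kernel $H$ taking values in $k[[\widehat G]]$; one then proves (Theorem \ref{teo:Rhatflat_iff_Rcoherent}, Cohn's fir theory, Corollary \ref{cor:completedgroupalg=directunionprojectives}) that $k[[\widehat G]]$ is a direct union of projective $k[G]$-modules, so $d(G)$ lies in a finitely generated projective module; Theorem \ref{teo:dicks_derivation_to_decomposition} of Dicks then gives a graph-of-groups decomposition of $G$ with $H$ a vertex group and finite edge groups; and a final fixed-point analysis (Lemma \ref{lem:no_fixed_points_modH} applied to $k[[\widehat{G'}/H']]$, plus the profinite subgroup conjugacy theorems for free products) shows the edge groups meeting $H$ are trivial, so that $H$ is a free factor. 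None of this is replaceable by Hall-type separability statements, so the hard direction of your argument is missing its central idea.
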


%Our methods are entirely different to those that have appeared previously and seem more readily expansible to other classes of groups $G$. 
We have already explained why (a)$\implies$ (b).
 Now notice that  $\Hom(G,P)$ can be identified with the continuous morphisms $\Hom_{\text{cont.}}(\widehat{G},P)$,  for every finite group $P$. 
If $\widehat{H}=\overline{H}$ then $\Hom(H,P)$ can also be identified with $\Hom_{\text{cont.}}(\overline{H},P)$. 
In this case condition (b) is equivalent to

\smallskip

(b') for every finite group $P$, the number  $h(\widehat{G},\overline{H},\gamma,P)$ is constant on $\Hom_{cont.}(\overline{H},P)$. 

\smallskip

As a first step we show in Proposition \ref{prop:profinitefactor} that (b') is equivalent to (c). 
This is a completely general result.
 The only condition in passing from (b) to (b') is that $H$ be closed in the profinite topology of $G$. This is ensured, for instance, if $G$ is subgroup separable (every finitely generated subgroup is closed in the profinite topology),  also termed LERF. 
 
 The meat of the paper is the proof of (c)$\implies$ (a). 
 Condition (c) implies that there is a non-inner derivation (or crossed-homomorphism) $d:\widehat{G}\rightarrow k[[\widehat{G}]]$ whose kernel is precisely $\overline{H}$, where $k$ is a finite commutative ring. 
 This, and the related definitions, are explained in Section \ref{sec:prelims}. 
 The analogous statement in the discrete setting is almost, but not quite, equivalent to (a). 
 In fact, a result of Dicks following Dunwoody  \cite{Di81} states that if there is a derivation $d':G\rightarrow M$ to a projective $k[G]$-module $M$ with kernel $H$, then $G$ splits as a fundamental group of a finite graph of groups, one of whose vertex groups is $H$, and all of whose edge groups are finite, with conditions on their sizes related to the ring $k$. 
 We must therefore do two things: first obtain some $d'$ from $d$, and second show that the graph of groups decomposition of $G$ actually makes $H$ a free factor. 
 The second thing is done in Section \ref{sec:Dicksproof}. 
 The first thing is done in Section \ref{sec:limits_projective_modules}, where it is shown (Corollary \ref{locvfree}) that, for $G$ virtually free, and $k=\Z/n\Z$ with square-free $n$ the left $k[G]$-module $k[[\widehat G]]$  is isomorphic to a direct union of projective $k[G]$-modules.
 This means, as $G$ is finitely generated, that the image $d(G)$ is contained in a finitely generated projective $k[G]$-module. 
 
 In order to obtain Corollary \ref{locvfree}, we give  in Theorem \ref{teo:Rhatflat_iff_Rcoherent} a criterion for the profinite completion $\widehat{R}$ of a ring $R$ satisfying certain conditions to be a flat $R$-module. 
 By a result of \cite{CoFR}, the criterion implies that every finitely generated submodule of $\F_p[[\widehat{F}]]$ is free, where $F$ is a finitely generated free group.    
  The main theorem could be generalised to wider classes of groups if we can answer the following question.
 \begin{Question}
	For which finitely generated groups $G$ is % other than virtually free and virtually polycyclic groups,  
	the profinite completion of $\F_p[G]$ a subgroup of a  direct union of projective  $\F_p[G]$-modules? 
\end{Question}

It is also natural to ask a different version of our Question \ref{qu:recognise_free_factors_profinitely}, namely:

\begin{Question}\label{ZQ}
	Let $G$ be a finitely generated residually finite group. Assume that $\widehat G$  decomposes as a profinite free product. Does $G$ decompose as a free product?
\end{Question}

After completing this manuscript we learned from P. Zalesskii (\cite[Corollary 4.4]{Zalesskii_vfree}) that this question  has positive answer for certain classes of accessible groups; in particular, for virtually free groups.
However,  we may not conclude that the factors in the free decompositions are related to each other (in particular, it is not clear that any of the free profinite factors is the closure of any of the abstract free factors), because there is no analogue of the Grushko-Neumann theorem in profinite groups (\cite{Lucchini}).

Putting together Theorem \ref{teo:Rhatflat_iff_Rcoherent} and deep results on virtually polycyclic groups (\cite{Jategaonkar,Roseblade73, Roseblade}) yields that the completed group ring $\F_p[[\widehat{G}]]$ of a virtually polycyclic group $G$ is a  flat $\F_p[G]$-module. 
Although this is unrelated to the rest of the paper, it seems an interesting phenomenon in its own right.
\begin{Question}
	For which finitely generated groups $G$ is % other than virtually free and virtually polycyclic groups,  
	the profinite completion of $\F_p[G]$ a flat $\F_p[G]$-module? 
\end{Question}

\section*{Acknowledgments}
The authors are grateful to the anonymous and very efficient referee. 
They also acknowledge partial financial support from Spain's Ministry of Science and Innovation, through grant  PID2020-114032GB-I00 and the Severo Ochoa Programme for Centres of Excellence in Research and Development (CEX2019-000904-S).

 \section{Preliminaries}\label{sec:prelims}

   All rings in this paper have a unit, denoted 1,  and all ring homomorphisms send  the unit to the unit. 
   By a module we will mean a left module if we do not say the opposite.

    A nonempty class of finite groups $\mathcal C$ is
   a {\bf pseudovariety}  if it is closed under taking subgroups, homomorphic images and
   finite direct products.  
    The pro-$\mathcal C$ completion of a group $G$ is denoted by ${G}_{\widehat {\mathcal C}}$ and by   $\widehat{G}$ we denote the profinite completion of $G$.
     The closure of a subgroup $H\leq G$ inside $\widehat{G}$ is denoted $\overline{H}$.
   
   The abstract free product of groups $G_1,G_2$ is denoted by $G_1\ast G_2$, while $G_1\coprod_{\mathcal C} G_1$ denotes the free pro-$\mathcal C$ product of two pro-$\mathcal C$ groups $G_1, G_2$. 
  Recall that $G_1\coprod_{\mathcal C} G_2$ is characterized up to isomorphism by the following universal property: there are continuous homomorphisms $\varphi_i:G_i\rightarrow G_1\coprod_{\mathcal C} G_2$ ($i=1,2$),  and for any pair of continuous homomorphisms $\psi_i: G_i\rightarrow P$ ($i=1,2$) to a finite group $P$ in $\mathcal{C}$ there is a unique continuous homomorphism $\psi:G_1\coprod_{\mathcal C} G_2 \rightarrow P$ such that $\psi\circ\varphi_i=\psi_i$, $i=1,2$. 
   	The basic properties of free pro-$\mathcal C$ products can be  found in \cite[Section 9]{RibesZalesskii}. 
   If $\mathcal C$ is the pseudovariety of all finite groups, then we write $A\coprod B$ instead of $A\coprod_{\mathcal C} B$.
   Let $G$ be a pro-$\mathcal C$-group and $H$ a closed subgroup of $G$. We say that $H$ is a {\bf pro-$\mathcal C$ free factor} of $G$ if there exists a closed subgroup $K$ of $G$ such that  $G= H\coprod_{\mathcal C} K$.

    If $k$ is a ring and $G$ is a group, we denote by $k[G]$ the group ring of $G$ over $k$.
    The {\bf augmentation ideal} of $k[G]$ is the kernel of the canonical map $k[G]\to k$, and it is denoted by $I_{k[G]}$.
     
   %  If $G$ is a pro-$\mathcal{C}$ group and $k$ is a finite ring whose additive group is in $\mathcal{C}$,  the {\bf completed group ring} $k[[G]]$ of $G$ over $k$ is  the inverse limit of $k[G/U]$, where $U$ is a normal  open subgroup of $G$ (in the pro-$\mathcal{C}$ topology).
  %   We suppress the pseudovariety $\mathcal{C}$ from the notation as it will be clear from context. 
   If $G$ is a profinite group and $k$ is a finite ring the {\bf completed group ring} $k[[G]]$ of $G$ over $k$ is  the inverse limit of $k[G/U]$, where $U$ is a normal  open subgroup of $G$.
   If $G=\widehat{H}$ is the profinite completion of a group $H$, then $k[[G]]=k[[\widehat{H}]]\cong \widehat{k[H]}=\varprojlim k[H/N]$ where $N$ ranges through the normal subgroups of finite index in $H$. 
  The {\bf augmentation ideal} of $k[[G]]$ is the kernel of the canonical map $k[[G]]\to k$, and it is denoted by $I_{k[[G]]}$.
     It is a free $k$-module on the profinite space $\{g-1 \mid g\in G\}$ and, if $T$ is a profinite space generating $G$ and containing the identity, then $I_{k[[G]]}$ is generated as a profinite  $k[[G]]-$module by $\{t-1 \mid t\in T\}$ (see \cite[6.3.2]{RibesZalesskii}).
      If $A$ is a closed subgroup of $G$ we denote by $I_{k[[A]]}^G$ the left closed ideal of $I_{k[[G]]}$ generated by $I_{k[[A]]}$.

	Let $G$ be a group, $k$ a ring and $M$ a $k[G]$-module. 
	A \textbf{derivation} or \textbf{crossed homomorphism} is a map $d:G\rightarrow M$ such that $d(xy)=xd(y)+d(x)$ for all $x,y\in G$. 
	The derivation is \textbf{inner} if there is some $m\in M$ such that $d=\mathrm{ad_m}: x\rightarrow (1-x)m$. 
%	The first cohomology group of $G$ with coefficients in $M$ is the quotient of all derivations from $G$ to $M$ by the normal subgroup of all inner derivations. 

	Let $\Gamma$ be a profinite group, $k$ a profinite ring and $A$ a discrete  $k[[\Gamma]]$-module. 
	A \textbf{continuous derivation} $d:\Gamma\rightarrow A$ is a continuous map that satisfies $d(gh)=gd(h)+d(g)$ for all $g,h\in\Gamma$. 
	Denote by $\mathrm{Der}(\Gamma, A)$ the set of continuous derivations from $\Gamma$ to $A$.

The following Lemma is well-known to experts, but we could not find a reference with the exact statement. 
It uses similar ideas to \cite[Proposition 9.3.8]{RibesZalesskii}.

\begin{lem}\label{lem:freeprofprod_implies_idealsdecompose} 
Let $k$ be  a finite ring and $\mathcal C$ an extension-closed pseudovariety of finite groups  containing the group $(k,+)$. Let $\Gamma$ be a pro-$\mathcal C$ group and assume that $\Gamma=\Delta_1\coprod_{\mathcal C} \Delta_2$. 
Then $$I_{k[[\Gamma]]}=I_{k[[\Delta_1]]}^{\Gamma}\oplus I_{k[[\Delta_2]]}^{\Gamma}.$$
\end{lem}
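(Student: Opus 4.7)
The plan is to treat the two containments separately, with directness of the sum being the main work. For the containment $I_{k[[\Delta_1]]}^{\Gamma}+I_{k[[\Delta_2]]}^{\Gamma} = I_{k[[\Gamma]]}$, recall from the preliminaries that $I_{k[[\Gamma]]}$ is topologically generated as a $k[[\Gamma]]$-module by $\{t-1:t\in T\}$ for any profinite generating set $T$ of $\Gamma$ containing $1$. Since $\Delta_1 \cup \Delta_2$ topologically generates $\Gamma$ (by the universal property of the free pro-$\mathcal{C}$ product), the closed left ideal $I_{k[[\Delta_1]]}^{\Gamma}+I_{k[[\Delta_2]]}^{\Gamma}$ already contains $\{\delta-1:\delta \in \Delta_1\cup\Delta_2\}$, and hence equals $I_{k[[\Gamma]]}$.

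To prove directness, I will construct, for $i \in\{1,2\}$, a continuous $k[[\Gamma]]$-linear projection $\pi_i\colon I_{k[[\Gamma]]}\to I_{k[[\Delta_i]]}^{\Gamma}$ that is the identity on $I_{k[[\Delta_i]]}^{\Gamma}$ and zero on $I_{k[[\Delta_j]]}^{\Gamma}$ for $j\ne i$; these clearly force $I_{k[[\Delta_1]]}^{\Gamma}\cap I_{k[[\Delta_2]]}^{\Gamma}=0$. By the standard bijection between continuous derivations $\Gamma\to M$ and continuous $k[[\Gamma]]$-module homomorphisms $I_{k[[\Gamma]]}\to M$ given by $g-1\mapsto d(g)$ (applied to the finite $k$-module quotients of the profinite module $M = I_{k[[\Delta_1]]}^{\Gamma}$ and assembled by inverse limit), building $\pi_1$ reduces to producing a continuous derivation $d_1\colon \Gamma\to I_{k[[\Delta_1]]}^{\Gamma}$ whose restriction to $\Delta_1$ is the canonical derivation $\delta\mapsto\delta-1$ and whose restriction to $\Delta_2$ is identically zero.

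The tool for building $d_1$ is the semidirect product trick: continuous derivations $\Gamma\to M$ biject with continuous sections of the natural projection $M\rtimes \Gamma\to\Gamma$, via $g\mapsto (d(g),g)$. On each factor, the prescribed derivation gives a continuous homomorphism $\Delta_i\to M\rtimes\Gamma$ lifting the inclusion $\Delta_i \hookrightarrow \Gamma$. Provided $M\rtimes\Gamma$ is pro-$\mathcal{C}$, the universal property of the free pro-$\mathcal{C}$ product glues these into a continuous homomorphism $\Gamma\to M\rtimes\Gamma$ which, by the uniqueness clause applied to the inclusion $\Gamma\hookrightarrow\Gamma$, must be a section of the projection; this yields $d_1$. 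The resulting $\pi_1$ sends $\delta-1\mapsto\delta-1$ for $\delta\in\Delta_1$, hence is the identity on $I_{k[[\Delta_1]]}^{\Gamma}$ (which is topologically generated over $k[[\Gamma]]$ by such elements), and sends $\delta-1\mapsto 0$ for $\delta\in\Delta_2$, hence vanishes on $I_{k[[\Delta_2]]}^{\Gamma}$. The projection $\pi_2$ is produced symmetrically.

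The remaining point, and the only place where the hypotheses on $\mathcal{C}$ enter, is verifying that $M\rtimes\Gamma$ is indeed pro-$\mathcal{C}$; this is the step I expect to be the main obstacle to write cleanly. Writing $M$ as the inverse limit of its finite $k$-module quotients $M/N$, the profinite group $M\rtimes\Gamma$ is the inverse limit of the finite groups $(M/N)\rtimes(\Gamma/U)$ taken over open normal $U\le\Gamma$ acting through $\Gamma/U$ on $M/N$. Since $(k,+)\in\mathcal{C}$, every finite $k$-module is a finite iterated extension of copies of $(k,+)$, and $\Gamma/U\in\mathcal{C}$ because $\Gamma$ is pro-$\mathcal{C}$; extension-closure of $\mathcal{C}$ then forces each $(M/N)\rtimes(\Gamma/U)$ into $\mathcal{C}$, so $M\rtimes\Gamma$ is pro-$\mathcal{C}$ as required, completing the argument.
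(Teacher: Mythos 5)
Your proof is correct and follows essentially the same route as the paper's: both construct the splitting by mapping $\Gamma$ into a semidirect product $M\rtimes\Gamma$ via the universal property of the free pro-$\mathcal{C}$ product, with extension-closure and $(k,+)\in\mathcal{C}$ guaranteeing the semidirect product is pro-$\mathcal{C}$ (the paper merely packages the two projections into a single map $\rho\colon I_{k[[\Gamma]]}\to I_{k[[\Delta_1]]}^{\Gamma}\oplus I_{k[[\Delta_2]]}^{\Gamma}$ built from finite quotients and checks the module identity by hand instead of invoking the derivation--homomorphism dictionary). One small repair in your last paragraph: a finite $k$-module need not be an iterated extension of copies of $(k,+)$ (take $k=\Z/4\Z$ and the module $\Z/2\Z$); instead observe that a finite $k$-module is a homomorphic image of some $k^{n}$ and use that $\mathcal{C}$ is closed under finite direct products and quotients.
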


\begin{proof}
	To simplify notation, write $I:=I_{k[[\Gamma]]}$ and $J_i:=I_{k[[\Delta_i]]}^{\Gamma}$, for $i=1,2$.
	Since $\Gamma$ is generated by $\Delta_1$ and $\Delta_2$, the augmentation ideal $I$ is generated as a profinite $k[[\Gamma]]$-module by $\{\delta-1 \mid \delta \in \Delta_1 \cup \Delta_2\}$. 
	So evaluating in $I$  gives a canonical surjective $k[[\Gamma]]$-morphism $\pi: J_1\oplus J_2 \rightarrow I$.
	To prove the claim, we will find an inverse  $\rho:I\rightarrow J_1\oplus J_2$ of $\pi$. 
	
	For $i=1,2$ and each open normal subgroup $U\unlhd_o\Gamma$, denote by $J_{i,U}:=I_{k[[\Delta_iU/U]]}^{\Gamma/U}$ the image of $J_i$  in $k[\Gamma/U]$. 
	Note  that $J_1\oplus J_2=\varprojlim_{U\unlhd_o\Gamma}J_{1,U}\oplus J_{2,U}$. 
	Since $\Gamma$ acts on   $J_{1,U}\oplus J_{2,U} \in \mathcal{C}$, the group $H_U:=(J_{1,U}\oplus J_{2,U})\rtimes \Gamma$ is a pro-$\mathcal C$ group. 
	For $i=1,2$, define $\phi_{i,U}:\Delta_i\rightarrow H_U$ by $\delta \mapsto (\delta U- U, \delta )$ where $\delta U-U \in J_{i,U}$. 
	By definition of $J_{i,U}$, these are continuous homomorphisms. 
	The universal property of free pro-$\mathcal C$ products yields a unique continuous homomorphism $\psi_U:\Gamma \rightarrow H_U$ extending $\phi_{1,U}$ and $\phi_{2,U}$. 
	Denote, respectively,  by $p_J$ and $p_{\Gamma}$ the projections of $H_U$ to $J_{1,U}\oplus J_{2,U}$ and $\Gamma$.
	Since $p_{\Gamma}\circ\phi_{i,U}=\iid_{\Delta_i}$, the universal property of free pro-$\mathcal C$ products applied to $\iid_{\Gamma}$ ensures that $p_{\Gamma}\circ\psi_U=\iid_{\Gamma}$.
	Now define $\rho_U:I\rightarrow J_{1,U}\oplus J_{2,U}$ by $\gamma -1 \mapsto p_J(\psi_U(\gamma))$ for $\gamma \in \Gamma$ and extending by $k$-linearity. 
	This is indeed a profinite $k[[\Gamma]]$-module morphism because, as $\psi_U$ is a homomorphism,
	 for every $\gamma, \epsilon\in \Gamma$ we have
	  \begin{align*} 
	   \rho_U(\epsilon(\gamma -1))&=\rho_U(\epsilon\gamma-1 - (\epsilon -1)) =p_J(\psi_U(\epsilon\gamma))-p_J(\psi_U(\epsilon))\\
	  &=p_{\Gamma}(\psi_U(\epsilon))p_{J}(\psi_U(\gamma))+p_J(\psi_U(\epsilon))-p_J(\psi_U(\epsilon)) \\
	  &=\epsilon p_J(\psi_U(\gamma))	  =\epsilon \rho_U(\gamma).
	  \end{align*}
	  
	 Let $\rho:I\rightarrow J_1\oplus J_2$ be the  map given by the universal property of inverse limits for $J, \{J_{1,U}\oplus J_{2,U} \mid U\unlhd_o\Gamma\}$ and $\{\rho_U\mid U\unlhd_o \Gamma\}$. 
	 Observe that, for $i=1,2,$ and every $U\unlhd_o\Gamma$ and $\delta \in \Delta_i$, we have $\rho_U(\pi(\delta-1))=\rho(\delta-1)=\delta U-U$, so $\rho_U\circ\pi$ is the projection map $J_1\oplus J_2 \rightarrow J_{1,U}\oplus J_{2,U}$ and thus $\rho\circ \pi=\iid_{\Gamma}$, as required. %(by the uniqueness of the map given by the universal property of inverse limits)
\end{proof}

Recall that a \textbf{supernatural number} is a formal product 
$n=\prod_{p}p^{n(p)}$ where $p$ runs through all prime numbers and $n(p)\in \N\cup \{\infty\}$. 
By convention, $n<\infty$ and $\infty+n=n+\infty=\infty+\infty=\infty$ for all $n\in\N$. 
A supernatural number $m=\prod_p p^{m(p)}$ is said to \textbf{divide} another one $n=\prod_p p^{n(p)}$ if $m(p)\leq n(p)$ for every prime $p$. 
The \textbf{lowest common multiple}  of a collection of supernatural numbers $\{n_i=\prod_{p}p^{n(i,p)} \mid i\in I\}$ is defined as $\lcm\{n_i : i\in I\}:=\prod_p p^{n(p)}$ where $n(p)=\max\{n(i,p) \mid i\in I\}$.
If $H$ is a closed subgroup of a profinite group $\Gamma$, we put $|\Gamma:H|=\lcm\{|\Gamma:HN|\colon N\unlhd_o \Gamma\} $.

Let $\Gamma$ be a profinite group with closed subgroups $A,B\leq \Gamma$ and $p$ a prime number.
The subgroup $A$ acts continuously on  $\F_p[[\Gamma/B]]=\varprojlim_{N\unlhd_o\Gamma}\F_p[\Gamma/BN]$  by left coset multiplication: $am=a(m_N)_{N\unlhd_o\Gamma}=(am_N)_{N\unlhd_o\Gamma}$ where $$am_N=a\left(\sum_{gBN\in\Gamma/BN}f_{N,g}gBN\right)=\sum_{gBN\in\Gamma/BN}f_{N,g}agBN, \quad f_{N,g}\in \F_p.$$
The above then means that, for each $N\unlhd_o \Gamma$, $\F_p[\Gamma/BN]$ decomposes as an $A$-module into $$\F_p[\Gamma/BN]\cong \prod_{AgBN\in A\backslash\Gamma/B}\F_p[AgBN]\cong\prod_{AgBN\in A\backslash \Gamma/B}\F_p[A/A\cap B^gN] $$
(since we are using left cosets, conjugation is done on the left: $B^g=gBg^{-1}$).
Taking the inverse limit, we obtain that the $A$-module (and even the $\F_p[[A]]$-module) $\F_p[[\Gamma/B]]$ decomposes as 
\begin{equation}\label{eq:G/B as prod of A orbits}
\F_p[[\Gamma/B]]\cong \prod_{AgB\in A\backslash \Gamma / B}  \F_p[[A/A\cap B^g]].
\end{equation}

Denote by $\F_p[[\Gamma/B]]^A$ the set $\{m\in \F_p[[\Gamma/B]]:\ am=m \textrm{\ for every\ }a\in A\}$ of fixed points of $A$. 
By \eqref{eq:G/B as prod of A orbits},  this set is reduced to 0 if and only if $\F_p[[A/A\cap B^g]]^A=\{0\}$ for every $g\in \Gamma$. 
This observation will help in proving the following.

\begin{lem}\label{lem:no_fixed_points_modH}Let $\Gamma$ be a profinite group with closed subgroups $A,B\leq \Gamma$ and $p$ a prime number.
Then $\F_p[[\Gamma/B]]^A=\{0\}$ if and only if 
 $p^{\infty}$ divides $  |A:A\cap B^g |  $
 for every $g\in \Gamma$. 
\end{lem}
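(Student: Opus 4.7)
The plan is to use the orbit decomposition \eqref{eq:G/B as prod of A orbits} displayed just above the statement: since $\F_p[[\Gamma/B]] \cong \prod_{g} \F_p[[A/A\cap B^g]]$ as $\F_p[[A]]$-modules, the fixed-point space $\F_p[[\Gamma/B]]^A$ vanishes if and only if $\F_p[[A/A\cap B^g]]^A = 0$ for every double-coset representative $g$. This reduces the lemma to the following general claim:
\emph{for a profinite group $A$ and closed subgroup $H \leq A$, $\F_p[[A/H]]^A = 0$ iff $p^{\infty}$ divides $|A:H|$.}

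To prove the claim I would work one finite level at a time. For each $N \unlhd_o A$ the $A$-set $A/HN$ is transitive, so $\F_p[A/HN]^A$ is one-dimensional, spanned by $\sigma_N := \sum_{x \in A/HN} x$. Taking inverse limits,
$$\F_p[[A/H]]^A = \varprojlim_{N \unlhd_o A} \F_p\sigma_N.$$
For $N' \subseteq N$ open normal in $A$, the projection $A/HN' \twoheadrightarrow A/HN$ is an $A$-equivariant surjection of transitive finite $A$-sets, so all of its fibers have size $[A:HN']/[A:HN] = [HN:HN']$; hence the transition map $\F_p\sigma_{N'} \to \F_p\sigma_N$ is multiplication by $[HN:HN'] \bmod p$.

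It remains to show this inverse limit is nonzero if and only if the $p$-part of $[A:HN]$ is bounded in $N$, equivalently, iff $p^{\infty} \nmid |A:H|$. If the $p$-part of $[A:HN]$ attains a maximum at some $N_0$, then for every $N \subseteq N_0$ the $p$-part of $[HN_0:HN]$ is $1$, so the transition maps in the cofinal subsystem $\{N : N \subseteq N_0\}$ are isomorphisms and the limit equals $\F_p \neq 0$. Conversely, if the $p$-part of $[A:HN]$ is unbounded, then given any $N$ we pick $M \unlhd_o A$ whose $p$-part of $[A:HM]$ strictly exceeds that of $[A:HN]$; setting $N' := M \cap N \subseteq N$ gives $HN' \subseteq HM$ and hence $[A:HM]$ divides $[A:HN']$, so $p$ divides $[HN:HN']$ and the $N$-component of any element of the inverse limit must vanish.

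The main subtlety, though minor, is the converse direction of the last step: the witness $M$ need not lie inside the chosen $N$, but replacing it with $M \cap N$ preserves the large $p$-part because the containment $HN' \subseteq HM$ forces $[A:HM] \mid [A:HN']$. Once this is handled, the equivalence drops out of the definition of $|A:H|$ as the least common multiple of the finite indices $[A:HN]$.
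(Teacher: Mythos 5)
Your proposal is correct and follows essentially the same route as the paper: reduce to a single orbit via the displayed decomposition, observe that the fixed-point space at each finite level is the one-dimensional span of the orbit sum, and read off vanishing of the inverse limit from whether $p$ divides the relative indices $[HN:HN']$ (the paper's Lemma \ref{finitefix}). Your handling of the cofinality issue in the unbounded case, via $N'=M\cap N$, is a clean substitute for the paper's chain argument in the countably based case, but it is the same proof in substance.
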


In order to prove the lemma we need the following result on finite groups.

\begin{lem}\label{finitefix}
Let $G$ be a finite group, $H\leq G$ and $N\unlhd G$. 
The image of the map
$\F_p[G/H]^G\to \F_p[G/NH]^G$ is trivial if and only if $p$ divides $|NH:H|$.
\end{lem}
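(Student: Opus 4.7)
The plan is to identify both fixed-point subspaces explicitly using transitivity of the $G$-action on cosets, and then track the unique generator through the canonical quotient map.

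First, I would observe that $G$ acts transitively on $G/H$, so an element $\sum_{gH\in G/H}c_{gH}\,gH\in\F_p[G/H]$ is $G$-invariant precisely when the coefficients $c_{gH}$ are constant across all $H$-cosets. Hence $\F_p[G/H]^G$ is one-dimensional, spanned by the orbit sum
\[
s_H:=\sum_{gH\in G/H}gH.
\]
The same argument shows $\F_p[G/NH]^G=\F_p\cdot s_{NH}$ where $s_{NH}:=\sum_{gNH\in G/NH}gNH$.

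Next, I would trace $s_H$ through the natural $G$-equivariant projection $\pi:\F_p[G/H]\to\F_p[G/NH]$, $gH\mapsto gNH$. Since each coset of $NH$ is a disjoint union of exactly $|NH:H|$ cosets of $H$, we get
\[
\pi(s_H)=\sum_{gH\in G/H}gNH=|NH:H|\cdot s_{NH}.
\]
Therefore the image of $\F_p[G/H]^G\to\F_p[G/NH]^G$ is the $\F_p$-span of $|NH:H|\cdot s_{NH}$, which is trivial if and only if $|NH:H|\equiv 0\pmod p$, as claimed.

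I do not anticipate any real obstacle: the argument is essentially a one-line computation once the generators of the fixed-point lines are identified. The only point worth a brief sanity check is that $\F_p[G/H]^G$ is genuinely one-dimensional in characteristic $p$, but this follows at once from transitivity, with no appeal to Maschke-type semisimplicity.
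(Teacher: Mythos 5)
Your proof is correct and follows essentially the same route as the paper: both identify $\F_p[G/H]^G$ as the span of the full orbit sum via transitivity and observe that the projection multiplies this generator by $|NH:H|$. Nothing further is needed.
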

\begin{proof} 
	Observe that every element of $\F_p[G/H]^G$ is a  constant function $\displaystyle{f\sum_{gH\in G/H} gH}$ for some $ f\in \F_p$ and that its image in 
   $\F_p[G/NH]^G$ is $\displaystyle{\sum_{gNH\in G/NH}(|NH:H|f)gNH}$.
\end{proof}

\begin{proof}[Proof of Lemma \ref{lem:no_fixed_points_modH}]
By the observation made before the statement, we need only consider $A/A\cap B^g$ for some $g\in \Gamma$ and, since $A\cap B^g$ is simply a subgroup of $A$, we can reduce to the case where $A=\Gamma$.
For ease of exposition, assume also that $\Gamma$ is countably based, so that $\F_p[[\Gamma/B]]=\varprojlim_{i\in\N}\F_p[\Gamma/BN_i]$ where $\Gamma = N_0\geq N_1\geq \dots$ is a countable chain of open normal subgroups such that $\bigcap_{i\in\N}N_i=\{1\}$. 
The general case is not difficult, just more tedious to notate.

Assume first that $p^{\infty}$ divides $|\Gamma:B|$.
	Let $m=(m_i)_{i\in\N} \in \F_p[[\Gamma/B]]^\Gamma$,  so  $m_i\in  \F_p[[\Gamma/BN_i]]^\Gamma$
	for each $i\in\N$.
   Since   $p^{\infty}$ divides $|\Gamma:B|$, $p$ divides $|BN_i:BN_{i+1}|$ for infinitely many values of $i$. By Lemma \ref{finitefix},  for these $i$, $m_i=0$. Thus, $m=0$.
	
Conversely, if the $p$-part of  $|\Gamma:B|$ is finite, there must be some $k$ such that $p$ does not divide $|BN_k:BN_{i}|$ for all $i\ge k$. 
Put $$m_i= \frac 1{ |BN_k:BN_{i}|}\sum_{gBN_i\in \Gamma/BN_i} gBN_i \in \F_p[\Gamma/BN_i] \textrm{\ and \ } m=(m_i)\in \F_p[[\Gamma/B]].$$ It is clear that $0\ne m\in  \F_p[[\Gamma/B]]^\Gamma$. 
\end{proof}

We finish the section with another auxiliary result that we will need later.

\begin{lem}\label{lem:chain_char_sgps}
	Let $\mathcal{C}$ be a pseudovariety of finite groups and $G$ a finitely generated pro-$\mathcal{C}$ group.
	Then there is a chain $G>N_1>N_2>\dots$ of open characteristic subgroups of $G$ with trivial intersection such that for every $i$ and $\phi_i\in\Sur(G,G/N_i)$ we have that $\ker\phi_i=N_i$. 
\end{lem}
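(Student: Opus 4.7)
The plan is to let $N_i$ be (a strictly decreasing subsequence of) the subgroups
$$L_i := \bigcap\{N \unlhd_o G : [G:N] \leq i\}.$$
Since $G$ is a finitely generated pro-$\mathcal{C}$ group, one has $|\Hom_{\text{cont}}(G,Q)|\leq |Q|^{d(G)}$ for every finite group $Q$, so $G$ has only finitely many open normal subgroups of any given index. Hence each $L_i$ is a finite intersection of open subgroups, so is itself open; the family being intersected is visibly $\Aut(G)$-stable, so $L_i$ is characteristic. Since every continuous finite quotient of a pro-$\mathcal{C}$ group still lies in $\mathcal{C}$, the open normal subgroups of $G$ have trivial intersection, and because every open normal $N$ contains $L_{[G:N]}$, one concludes $\bigcap_i L_i = 1$.

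The main content is the rigidity claim: if $\phi \in \Sur(G, G/L_i)$ has kernel $K$, then $K = L_i$. Since $[G:K] = [G:L_i]$ is finite, it suffices to show $K \subseteq L_i$. The induced isomorphism $\bar\phi\colon G/K \to G/L_i$ yields an inclusion-preserving bijection between open normal subgroups of $G$ containing $K$ and those containing $L_i$, and crucially this bijection preserves the index in $G$. By definition of $L_i$, every open normal subgroup of index at most $i$ automatically contains $L_i$, so the bijection restricts to
$$\{\hat N \unlhd_o G : K\subseteq \hat N,\ [G:\hat N]\leq i\}\ \longleftrightarrow\ \{N \unlhd_o G : [G:N]\leq i\}.$$
The first set is a subset of the second, and by the first paragraph both are finite of the same cardinality, so they coincide. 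Thus every open normal subgroup of $G$ of index at most $i$ contains $K$, forcing $K\subseteq L_i$ as desired.

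Finally, one extracts a subsequence $N_j := L_{i_j}$ with strict inclusions $G>N_1>N_2>\dots$ starting from some proper $L_{i_1}$; this produces an infinite chain when $G$ is infinite (and a finite one terminating at $\{1\}$ when $G$ is finite), with all other required properties inherited from the $L_i$'s. The only delicate point in the argument is identifying $L_i$ as the intersection of \emph{all} open normal subgroups of index at most $i$, rather than some preselected characteristic sequence: this uniform definition is precisely what makes the rigidity bijection count two finite sets of equal size and pin down $K$ exactly.
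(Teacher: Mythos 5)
Your proof is correct and takes essentially the same approach as the paper: both define $N_i$ as the intersection of the (finitely many) open normal subgroups of index at most $i$, observe that this makes $N_i$ open, characteristic and cofinal, and deduce the rigidity of $\ker\phi_i$ from the fact that any surjection onto $G/N_i$ must account for the entire (finite) defining family of subgroups of index at most $i$. Your explicit index-preserving bijection and counting argument is just a more detailed rendering of the paper's remark that $G/N_i$ is a subdirect product of all quotients of $G$ of order at most $i$.
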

\begin{proof}
	For each $i\in\N$, let $N_i$ be the intersection of all open normal subgroups of $G$ of index at most $i$.
	Since $G$ is finitely generated, there are finitely many such subgroups, so $N_i$ is a closed subgroup of finite index and therefore open. 
	By construction, each $N_i$ is characteristic and they form a descending chain whose intersection is trivial.
	Now, fix $i\in\N$ and note that $G/N_i$ is a subdirect product of all finite quotients of $G$ of size at most $i$.
	Given $\phi\in\Sur(G,G/N_i)$, anything in $\ker\phi$ must have trivial image in each of the finite factors in the subdirect product $G/N_i$,  so $\ker\phi\leq N_i$.
	Conversely, if $g\notin\ker\phi$, then $g$ has nontrivial image in some factor of the subdirect product $G/N_i$, so cannot be contained in $N_i$.
	Thus $\ker\phi=N_i$.
\end{proof}

\section{Free profinite factors}

	Throughout this section, $\mathcal{C}$ denotes an extension-closed pseudovariety of finite groups.
	Subgroups of pro-$\mathcal{C}$ groups are closed and homomorphisms are continuous, unless stated otherwise.

  The main result of this section is a general criterion for  a given closed subgroup $H$ of a pro-$\mathcal C$ group $G$ to be a free pro-$\mathcal{C}$ factor in terms of extending homomorphisms $H\rightarrow P\in \mathcal{C}$ to $G$.
  It is the key to showing that (b) and (c) in Theorem \ref{teo:vfree} are equivalent, as explained in the introduction.

  Recall that given $H\leq G$ and a homomorphism $\gamma:H\rightarrow P$ to a finite group $P\in \mathcal{C}$, we denote by $h(G,H,\gamma,P)$ the number of homomorphisms $\tilde{\gamma}:G\rightarrow P$ such that $\tilde{\gamma}|_H=\gamma$.
   The number of epimorphisms $G\twoheadrightarrow P$ that extend $\gamma$ is denoted by $e(G,H,\gamma,P)$.
  
  \begin{pro}\label{prop:profinitefactor}
  Let $G$ be a finitely generated pro-$\mathcal C$ group and $H$ a closed finitely generated subgroup of $G$.  
   Then $H$ is a free pro-$\mathcal C$ factor of  $G$ if and only if  for every  group $P$ from $\mathcal C$, the number $h(G,H,\gamma,P)$ is independent of the choice of  $\gamma\in\Hom (H, P)$.
  \end{pro}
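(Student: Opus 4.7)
The direction $(\Rightarrow)$ is immediate from the universal property of the free pro-$\mathcal C$ product: if $G=H\coprod_{\mathcal C} K$ then for every $P\in\mathcal C$ there is a natural bijection $\Hom(G,P)\cong\Hom(H,P)\times\Hom(K,P)$, and restriction to $H$ is the first-coordinate projection, so every fibre has cardinality $|\Hom(K,P)|$, independent of~$\gamma$.

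For $(\Leftarrow)$, write $m(P):=h(G,H,\gamma,P)$ for any $\gamma\in\Hom(H,P)$, which is well-defined by hypothesis. Because the trivial homomorphism $\gamma_0:H\to P$ extends to the trivial $G\to P$, we have $m(P)\ge 1$, and the extensions of $\gamma_0$ are precisely the homomorphisms $G\to P$ factoring through $K_0:=G/\overline{H^G}$, where $\overline{H^G}$ denotes the closed normal closure of $H$ in $G$. Therefore $m(P)=|\Hom(K_0,P)|$ and
\[
|\Hom(G,P)|\;=\;|\Hom(H,P)|\cdot|\Hom(K_0,P)|\;=\;|\Hom(H\coprod\nolimits_{\mathcal C} K_0,\,P)|\qquad\text{for every }P\in\mathcal C.
\]

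The plan is to promote this numerical equality into a genuine isomorphism of pro-$\mathcal C$ groups compatible with the embedding of $H$. I would construct a continuous section $\sigma:K_0\to G$ of the quotient $G\to K_0$ and define $\varphi:H\coprod_{\mathcal C} K_0\to G$ by $\varphi|_H=\text{inclusion}$ and $\varphi|_{K_0}=\sigma$. Once $\varphi$ is surjective, the precomposition $\varphi^*:\Hom(G,P)\hookrightarrow\Hom(H\coprod_{\mathcal C} K_0,P)$ is an injection between equicardinal finite sets, hence a bijection for every $P\in\mathcal C$, and by Yoneda for pro-$\mathcal C$ groups $\varphi$ is an isomorphism, exhibiting $\sigma(K_0)$ as the required pro-$\mathcal C$ free complement of $H$.

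The hard part will be producing $\sigma$ and ensuring that the resulting $\varphi$ is surjective: any splitting of the short exact sequence $1\to\overline{H^G}\to G\to K_0\to 1$ yields only a semidirect decomposition $G=\overline{H^G}\rtimes\sigma(K_0)$, and the closed subgroup of $G$ topologically generated by $H\cup\sigma(K_0)$ need not contain all of $\overline{H^G}$. My approach would be a compactness argument via the chain of open characteristic subgroups supplied by Lemma~\ref{lem:chain_char_sgps}: at each finite level apply the constancy hypothesis to judiciously chosen $P\in\mathcal C$ to produce compatible finite-level lifts of the right form, and then thread them together as the inverse limit of a nonempty compact inverse system. The crucial ingredient—where the \emph{full} strength of the hypothesis (constancy across every $\gamma$, not merely the trivial one) must enter—is in arranging at each level a splitting compatible with the conjugation action of $H$, so that the limiting $\varphi$ is actually surjective and not merely the provider of a semidirect structure.
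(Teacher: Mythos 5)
The forward direction and the numerical identity you derive are correct: the extensions of the trivial homomorphism are exactly the maps factoring through $K_0=G/\overline{\langle H^G\rangle}$, so the constancy hypothesis does give $|\Hom(G,P)|=|\Hom(H,P)|\cdot|\Hom(K_0,P)|=|\Hom(H\coprod_{\mathcal C}K_0,P)|$ for all $P\in\mathcal C$ (this is essentially the identity the paper also extracts). Your closing observation is also sound: \emph{if} one had a continuous surjection $\varphi\colon H\coprod_{\mathcal C}K_0\to G$ restricting to the inclusion on $H$, then $\varphi^*$ would be an injection between equicardinal finite sets for every $P$, hence bijective, and since the open normal subgroups of a pro-$\mathcal C$ group with quotient in $\mathcal C$ intersect trivially, $\ker\varphi=1$.

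The genuine gap is that the entire difficulty of the proposition is concentrated in the step you leave as a ``plan'': producing the section $\sigma$ and the surjection $\varphi$. The extension $1\to\overline{\langle H^G\rangle}\to G\to K_0\to 1$ has no reason to split a priori, and, as you yourself note, even a splitting would only give a semidirect decomposition in which $H$ together with $\sigma(K_0)$ need not generate $G$. The sketched remedy (``judiciously chosen $P$'' and ``splittings compatible with the conjugation action of $H$'') is not an argument; no concrete choice of $P$ or finite-level lifting is exhibited, and it is not clear how constancy over all $\gamma$ would force compatibility of the lifts. The paper circumvents the splitting problem entirely and in a different way: it forms the auxiliary free product $\Gamma=G\coprod_{\mathcal C}K$ with $K$ an abstract copy of $H$, uses the counting identity to verify condition (b) of Proposition~\ref{prob} for the isomorphism $K\to H$, thereby obtains an automorphism of $\Gamma$ carrying $K$ onto $H$, and then applies the conjugacy theorem for finite subgroups of profinite free products to locate $K$ inside the visible complement of $H$ in $\Gamma$ --- which is why the argument first reduces to the case of finite $H$ via the quotients $G/\overline{\langle(N\cap H)^G\rangle}$ and only then runs a compactness argument to assemble a complement in $G$ itself. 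Some substitute for this machinery (or a genuinely worked-out version of your lifting scheme) is needed; as written, the proposal does not constitute a proof of the ``if'' direction.
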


In the language of \cite{PP15}, the result says that $H$ is a free pro-$\mathcal{C}$ factor of $G$ if and only if $H$ is measure-preserving, meaning that for every $P\in \mathcal{C}$, if $\varphi\in \Hom(G,P)$ is a random homomorphism chosen uniformly, then $\varphi_{|H}$ is uniformly distributed in $\Hom(H,P)$.

 In order to prove Proposition \ref{prop:profinitefactor} we will study whether an isomorphism between closed subgroups of a profinite group can be extended to an automorphism of the ambient group.   
%If $H$ is a closed subgroup of a finitely generated profinite group $G$, $P$ is a finite group and $\gamma\in  \Hom(H, P)$, we denote by  $e_{H\le G}(\gamma)$ the number of epimorphisms $\tilde \gamma\in \Sur(G, P)$  such that $\tilde \gamma_{|H}=\gamma$.   
 
 \begin{pro} \label{prob}
 	 Let $G$ be a finitely generated pro-$\mathcal C$ group and $\alpha:H_1\to H_2$ a continuous isomorphism between two closed subgroups of $G$.  
  Then the following are equivalent.
\begin{enumerate}
\item[(a)] there exists $\tilde \alpha\in \Aut(G)$ such that $\tilde \alpha_{|H_1}=\alpha$;
\item  [(b)] for every  group $P\in \mathcal C$ and every  $\gamma\in \Hom (H_2,P)$, 
$$h(G,H_1,\gamma\circ\alpha,P)=h(G,H_2,\gamma,P);$$
\item[(c)] for every  group $P\in \mathcal C$ and every  $\gamma\in \Hom (H_2,P)$, $$e(G,H_1,\gamma\circ\alpha,P)=e(G,H_2,\gamma,P);$$
 
\item [(d)]  for every  group $P\in \mathcal C$ and every   $\gamma\in \Hom (H_2,P)$,  
$$ \textrm{if $e(G,H_2,\gamma,P)\ne 0$, then $e(G,H_1,\gamma\circ\alpha,P)\ne 0$}.$$\end{enumerate}
\begin{rem*}
Since $G$ is finitely generated, by a well-known result of Nikolov and Segal \cite{NS07}, any homomorphism in $\Hom(G,P)$ is continuous.  Thus, for non-continuous $\gamma$ the conditions   (b), (c)  and (d) hold trivially as all numbers involved are 0.  
 \end{rem*}
\end{pro}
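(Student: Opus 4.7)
The plan is to prove the cycle (a) $\Rightarrow$ (b) $\Rightarrow$ (c) $\Rightarrow$ (d) $\Rightarrow$ (a), the last implication being the main content. The first is immediate: if $\tilde{\alpha}\in\Aut(G)$ extends $\alpha$, then $\tilde{\gamma}\mapsto\tilde{\gamma}\circ\tilde{\alpha}$ is a bijection between the $h(G,H_2,\gamma,P)$ extensions of $\gamma$ and the $h(G,H_1,\gamma\circ\alpha,P)$ extensions of $\gamma\circ\alpha$, with inverse given by precomposition with $\tilde{\alpha}^{-1}$. The implication (c) $\Rightarrow$ (d) is trivial from positivity.

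For (b) $\Rightarrow$ (c), I would fix $P\in\mathcal{C}$ and $\gamma\in\Hom(H_2,P)$, and note that $\gamma$ and $\gamma\circ\alpha$ share the same image $I\leq P$. Sorting homomorphisms by their image yields
\[h(G,H,\delta,P)=\sum_{I\leq Q\leq P} e(G,H,\delta,Q)\]
for $(H,\delta)=(H_2,\gamma)$ and $(H_1,\gamma\circ\alpha)$; each intermediate $Q$ lies in $\mathcal{C}$, since pseudovarieties are closed under subgroups. Induction on $|P|$, with base case $P=I$ (where $e=h$), combined with (b) applied to each such $Q$, then yields (c).

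The real work is (d) $\Rightarrow$ (a). Let $G>N_1>N_2>\cdots$ be the characteristic chain of open subgroups from Lemma \ref{lem:chain_char_sgps}, so that every surjection $G\twoheadrightarrow G/N_i$ has kernel exactly $N_i$. Writing $\pi_i:G\to G/N_i$ for the quotient map, $\pi_i$ itself is a surjective extension of $\pi_i|_{H_2}$, so $e(G,H_2,\pi_i|_{H_2},G/N_i)\geq 1$. By (d) there is a surjection $\psi_i:G\to G/N_i$ with $\psi_i|_{H_1}=\pi_i\circ\alpha$; by the characterizing property of $N_i$, $\ker\psi_i=N_i$, so $\psi_i$ descends to an automorphism $\sigma_i$ of $G/N_i$ satisfying $\sigma_i(\pi_i(h))=\pi_i(\alpha(h))$ for all $h\in H_1$.

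It remains to assemble the $\sigma_i$ into a single automorphism. Define
\[X_i=\{\sigma\in\Aut(G/N_i):\sigma(\pi_i(h))=\pi_i(\alpha(h))\ \text{for all}\ h\in H_1\},\]
a nonempty finite set. For $j>i$, the subgroup $N_i/N_j$ is the intersection of all open normal subgroups of $G/N_j$ of index at most $i$, hence characteristic in $G/N_j$; every automorphism of $G/N_j$ therefore preserves $N_i/N_j$ and descends to an automorphism of $G/N_i$, giving a restriction map $\Aut(G/N_j)\to\Aut(G/N_i)$ that sends $X_j$ into $X_i$. The $(X_i)$ thus form an inverse system of nonempty finite sets, so $\varprojlim X_i\neq\emptyset$; any of its elements yields, via $G=\varprojlim G/N_i$, an automorphism $\tilde{\alpha}\in\Aut(G)$ with $\tilde{\alpha}(h)\equiv\alpha(h)\pmod{N_i}$ for every $i$ and $h\in H_1$, whence $\tilde{\alpha}|_{H_1}=\alpha$ since $\bigcap_i N_i=1$. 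The main obstacle is precisely this coherent gluing: different choices of $\psi_i$ produce different $\sigma_i$, and it is the rigidity of the $N_i$ provided by Lemma \ref{lem:chain_char_sgps} that makes the restriction maps between the $X_i$ well defined and hence the inverse limit construction go through.
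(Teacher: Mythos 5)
Your proposal is correct and follows essentially the same route as the paper: the same counting-by-image induction for (b)$\Rightarrow$(c), and for (d)$\Rightarrow$(a) the same use of the characteristic chain from Lemma \ref{lem:chain_char_sgps} to produce the nonempty finite sets $X_i$ (the paper's $S_i$) and extract $\tilde\alpha$ from their inverse limit. Your explicit verification that $N_i/N_j$ is characteristic in $G/N_j$, so that the restriction maps $X_j\to X_i$ are well defined, is a detail the paper leaves implicit.
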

 \begin{proof} The implications $(a)\Longrightarrow (b)$ and $(c)\Longrightarrow (d)$ are immediate.

 For a homomorphism $\beta\in \Hom(H, P)$ and a subgroup $\im \beta \le Q\le P$, we denote by $\beta^Q$ 
 the corestriction of $\beta$ to $Q$; that is, the homomorphism in $\Hom(H, Q)$ such that  $\beta^Q(h)=\beta(h)$ for every $h\in H$.

 The  implication  $(b)\Longrightarrow (c)$ is proved by induction on the order of $P$ and taking into account the equations 
\begin{align*}  
	h(G,H_1,\gamma\circ\alpha,P) &=
 \sum_{\im\gamma\le Q\le P}  e(G,H_1,\gamma^Q\circ \alpha,Q)\\  
 h(G,H_2,\gamma,P ) &=
 \sum_{\im\gamma\le Q\le P}  e(G,H_2,\gamma^Q,Q).
\end{align*}

 Let us prove  the  implication $(d)\Longrightarrow (a)$. 
  By Lemma \ref{lem:chain_char_sgps}, there is a chain $G>N_1>N_2>\dots$ of characteristic open subgroups of $G$ with trivial intersection such that for each $i$, if $\phi_i\in\Sur(G,G/N_i)$ then $\ker \phi_i=N_i$. 
  For each $i\in\N$, put  $P_i=G/N_i$, denote by $\delta_i:G\rightarrow P_i$ the canonical map and by $\gamma_i=(\delta_i)_{|H_2}$ the natural map $H_2\rightarrow P_i$. 
 By assumption, there exists $\phi_i\in\Sur(G,P_i)$ such that $\phi_{|H_1}=\gamma_i\circ \alpha$. 
 Since $\ker\phi_i=N_i$ we obtain an automorphism of $P_i$, denoted $\overline{\phi_i}$, which satisfies that $\overline{\phi_i}\circ(\delta_i)_{|H_1}=\gamma_i\circ\alpha$. 
 This shows that the set $S_i=\{\tau\in\Aut(P_i): \tau\circ(\delta_i)_{|H_1}=\gamma_i\circ\alpha\}$ is not empty. 
 Now, for $j>i$, the maps $\delta_{ji}:P_j\rightarrow P_i$ induce maps $S_j\rightarrow S_i$ that turn $(S_i)_{i\in\N}$ into a directed system of non-empty sets.
Their inverse limit $S=\varprojlim_{i}S_i$
 is therefore not empty and it is easy to check that any $\tilde{\alpha}\in S$ is an automorphism of $G$ such that $\tilde{\alpha}_{|H_1}=\alpha$, as required.
 \end{proof}

%Now we are ready to prove  Proposition \ref{prop:profinitefactor}

   \begin{proof} [Proof of Proposition \ref{prop:profinitefactor}] 
   The ``only if "part is clear. Let us show the ``if " part.
   
   Let us first assume that $H$ is finite.
   Let $P$ be a group from $\mathcal C$ and $\gamma\in \Hom (H, P)$.
    Then, since $h(G,H,\gamma,P)$ is constant on  $\Hom (H, P)$, \begin{equation}\label{eq:numberhoms}
    h(G,H,\gamma,P)\cdot |\Hom(H,P)|=|\Hom(G, P)|.
    \end{equation}
    Let   $\alpha:K\to H$ be  an isomorphism, $\gamma\in \Hom(H,P)$ and  consider the free pro-$\mathcal C$ product $\Gamma=G\coprod_{\mathcal C} K$.
    By the universal property, every homomorphism $\Gamma \rightarrow P$ is determined by two homomorphisms $G\rightarrow P$ and $K\rightarrow P$. 
    This observation gives the first and last equalities in the following:

  \begin{multline*}
  h(\Gamma,H,\gamma,P)=h(G,H,\gamma,P)\cdot |\Hom(K,P)|\\=h(G,H,\gamma,P)\cdot |\Hom(H,P)| \overset{\eqref{eq:numberhoms}}{=} |\Hom(G, P)|=h(\Gamma,K,\gamma\circ\alpha,P).
  \end{multline*}
  By Proposition \ref{prob}, there exists $\tilde \alpha\in \Aut(\Gamma)$ that extends $\alpha$. 
  In particular, there exists a subgroup $G^\prime$ in $\Gamma$ such that $\Gamma=G^\prime\coprod_{\mathcal C} H$. 
  By \cite[Corollary 7.1.3 (a)]{Ribesprofinitetree}, there exists $g\in\Gamma$ such that $K^g\le G^\prime$ or $K^g\le H$. The second possibility cannot occur because the normal subgroup of $\Gamma$ generated by $K$ has trivial intersection with $H$.
   Thus $K^g\le G^\prime$, and so, 
   $G\cong \Gamma/\overline{ \langle K^{\Gamma}\rangle}\cong (G^\prime/\overline{ \langle K^{  G^\prime}\rangle})\coprod_{\mathcal C} H.$ 
  Hence $H$ is a free pro-$\mathcal C$ factor of $G$.
  
  Now assume that $H$  is arbitrary. Let $N$ be an open normal subgroup of $G$. 
 
  We put $$H_N^\prime=H/(N\cap H) \textrm{\  and \ }  G_N^\prime =G/\overline{ \langle (N\cap H)^{  G}\rangle}.$$ We see $H_N^\prime$ as a subgroup of $G_N^\prime$. Let $P\in \mathcal C$ and $\gamma\in \Hom (H_N^\prime, P)$.
   We denote by $\tilde \gamma \in  \Hom (H, P)$ the canonical lift of $\gamma$. 
   Then we have that
  $h(G_N^\prime, H_N^\prime,\gamma,P)=h(G,H,\tilde \gamma,P)$.
   Hence $h(G_N^\prime, H_N^\prime,\gamma,P)$ is constant on  $\Hom (H_N^\prime, P)$.
    Using what we have proved before, $H_N^\prime$ is a free pro-$\mathcal C$ factor of $G^\prime_N$. Let $d$ be the number of generators of $G/\overline{H^G}$. Then we have just proved that the set
    $$S_N=\{(h_1,\ldots,  h_d)\in  (G^\prime_N)^d\colon  G^\prime_N=\overline {\langle h_1,\ldots, h_d\rangle }\coprod_{\mathcal C} H^\prime_N\}$$ is not empty.
    
  Observe that $\{S_N\}$ form an inverse system of compact sets. Thus, there exists $(g_1,\ldots, g_d)\in \varprojlim_N S_N\subset G^d$. It is clear that $G=\overline {\langle g_1,\ldots, g_d\rangle }\coprod_{\mathcal C} H$.

  \end{proof}

\section{Profinitely flat modules}\label{sec:flat_rings}

In this section we establish a connection between  coherence of rings and flatness of profinite completions. In the context of commutative rings the relation between completions, flatness and coherence was studied in \cite{Ha11}.

Recall that  $R$ is called {\bf right (resp. left) coherent} if every finitely generated submodule of a free right (resp. left) $R$-module is finitely presented.
It is \textbf{coherent} if it is both right and left coherent. 
If $M$ is a (right/left) $R$-module, its {\bf profinite completion} $\widehat{M}$ is the (right/left) $R$-module $\varprojlim_{N\leq_f M} M/N$ where $N$ ranges over all submodules of $M$ of finite index (as a subgroup). We denote by $\delta_M : M\to \widehat M$ the natural map from the group to its profinite completion. If $M$ is residually finite we will not distinguish between the elements of $m\in M$ and of $\delta_M(m)\in \widehat M$.
 
 Observe that if $M$ is a left (right) $R$-module, then  $\widehat M$ is a  left (right) $\widehat R$-module:  given  $r=(r_I)$, where $r_I\in R/I$ and $I$ is an ideal of $R$ of finite index, and  $m=(m_N)\in \widehat{M}$, where $m_N\in M/N$ and $N$ is  a submodule of  $M$ of finite index,  define $ r\cdot m=(  r_{\Ann(M/N)}\cdot m_N)$ ($  m\cdot r=(m_N\cdot   r_{\Ann(M/N)})$. In the case $M=R$, the completion $\widehat{R}$ is an $R$-bimodule.

% It looks like this notion has not been studied previously.  However, as we will see, this notion  has an interesting relation with coherence and can be seen as strong form of residual finiteness of $R$.

%Recall that  $R$ is called {\bf left coherent} if every finitely generated submodule of a free left $R$-module is finitely presented.
%
The goal of this section is to prove the following:
\begin{teo}\label{teo:Rhatflat_iff_Rcoherent}
	Let $R$ be a finitely generated residually finite ring such that all finitely presented right $R$-modules are residually finite. 
	The following are equivalent:
	\begin{enumerate}[label=\textnormal{(\arabic*)}]
		\item\label{it:Rhatflat+FPMRF} The profinite completion $\widehat{R}$ of $R$ is a flat left $R$-module. 
		%	\item\label{it:Rhat/R_flat} The right $R$-module $\widehat{R}/R$ is flat. 
		\item\label{it:Rcoherent+FPMRF} $R$ is right coherent.
	\end{enumerate}
\end{teo}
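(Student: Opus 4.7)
The plan is to prove the two directions by different methods: (2)$\Rightarrow$(1) via an identification of $M \otimes_R \widehat R$ with the profinite completion $\widehat M$ for finitely presented $M$, and (1)$\Rightarrow$(2) via a Baire category argument inside the profinite module $\widehat R^a$.

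For (2)$\Rightarrow$(1): First I would establish the canonical isomorphism $M \otimes_R \widehat R \cong \widehat M$ for every finitely presented right $R$-module $M$. Given a presentation $R^a \to R^b \to M \to 0$ with image $N = \sum_i n_i R$ finitely generated in $R^b$, right-exactness gives $M \otimes \widehat R = \widehat R^b/\widehat R\cdot N$, where the closed submodule $\widehat R\cdot N = \sum_i n_i\widehat R$ of $\widehat R^b$ coincides with the closure $\overline N$ of $N$ (each $n_i\widehat R$ is the closure of $n_i R$ since $R$ is dense in $\widehat R$, and a finite sum of closed submodules stays closed), while $\overline N$ is exactly the kernel of $\widehat R^b \twoheadrightarrow \widehat M$. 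For flatness, given any finitely generated $K \subseteq R^b$, coherence makes $K$ finitely presented (and residually finite by hypothesis), so $K \otimes \widehat R \cong \widehat K$; what remains is to show that the natural map $\widehat K \to \widehat R^b$ is injective, equivalently that $\{K \cap U : U \text{ of finite index in } R^b\}$ is cofinal among finite-index submodules of $K$. This is where the RF hypothesis is decisive: given a finite-index submodule $K' \leq K$, one uses the residual finiteness of finitely presented quotients built from $K/K'$ and its annihilator in $R$ to produce $U$ of finite index in $R^b$ with $U \cap K \subseteq K'$. Injectivity of $\widehat K \to \widehat R^b$ is equivalent to $\Tor_1^R(R^b/K, \widehat R) = 0$ for every such $K$, i.e.\ to flatness of $\widehat R$.

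For (1)$\Rightarrow$(2): to show $R$ is right coherent it suffices to show that the kernel $L$ of any map $\phi : R^a \to R^b$ is finitely generated. By flatness, $L \otimes \widehat R = \ker(\widehat R^a \to \widehat R^b)$, a closed (hence compact) submodule of $\widehat R^a$. Since $R$ is finitely generated (so countable), $L$ is countable and $L = \bigcup_\alpha L_\alpha$ is a directed union of its finitely generated submodules indexed by a countable set. As tensor commutes with filtered colimits, $L \otimes \widehat R$ is a countable union of the compact (hence closed) submodules $L_\alpha \otimes \widehat R$. Baire category in the compact Hausdorff topological group $L \otimes \widehat R$ forces some $L_{\alpha_0} \otimes \widehat R$ to have nonempty interior, and so as a subgroup to be open; hence $(L \otimes \widehat R)/(L_{\alpha_0} \otimes \widehat R)$ is finite. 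Adjoining finitely many coset representatives, approximated by elements of $L$ (which is dense in $L \otimes \widehat R$), we obtain a finitely generated $L_{\alpha_1} \subseteq L$ with $L_{\alpha_1} \otimes \widehat R = L \otimes \widehat R$; flatness then yields $(L/L_{\alpha_1}) \otimes \widehat R = 0$. Since $L/L_{\alpha_1}$ embeds in the finitely presented module $R^a/L_{\alpha_1}$, it is residually finite by hypothesis; a nonzero finite quotient $M'$ of $L/L_{\alpha_1}$ would be annihilated by a finite-index two-sided ideal $J$, and the surjection $\widehat R \twoheadrightarrow R/J$ would give $M' \otimes \widehat R \twoheadrightarrow M' \otimes_R R/J = M' \neq 0$, contradicting $(L/L_{\alpha_1}) \otimes \widehat R = 0$. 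Hence $L = L_{\alpha_1}$ is finitely generated.

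The main obstacles are the topological descent issues in both directions. In (2)$\Rightarrow$(1), the technical heart is the cofinality of $\{K \cap U\}$ inside the finite-index submodules of $K$, which requires carefully combining coherence with the residual finiteness of finitely presented modules. In (1)$\Rightarrow$(2), the Baire category step rests on countability of $R$ and the identification, through flatness, of $L \otimes \widehat R$ as a closed subset of $\widehat R^a$ endowed with its profinite topology.
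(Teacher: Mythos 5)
Your proposal is correct and follows essentially the same route as the paper: the same key lemma that residual finiteness of finitely presented modules makes the profinite topology of a finitely generated submodule of $R^b$ agree with the subspace topology (hence $M\otimes_R\widehat R\cong\widehat M$ and exactness of completion) for (2)$\Rightarrow$(1), and the same Baire-category argument on a countable union of compact submodules of $\widehat R^a$, concluded via residual finiteness of $R^a/L_{\alpha_1}$, for (1)$\Rightarrow$(2). The only technical point you leave sketched --- producing $U$ of finite index in $R^b$ with $U\cap K\subseteq K'$ --- is handled in the paper by observing that a finite-index submodule $K'$ of a finitely generated module over a finitely generated ring is itself finitely generated, so $R^b/K'$ is finitely presented, hence residually finite, and one chooses $U\supseteq K'$ avoiding the finitely many nontrivial cosets of $K'$ in $K$.
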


Before proving the theorem, we need some auxiliary results. 
%Since the profinite completion functor on groups is right exact \todo{find reference for this} we obtain the following.
 \begin{lem}[See Proposition 3.2.5 of \cite{RibesZalesskii}] \label{indpro} Let $ A\to B\to C\to 0$ be an exact sequence of right $R$-modules. 
 	Then the induced sequence of right  $\widehat R$-modules
	$ \widehat A\to \widehat B \to \widehat C\to 0$ is also exact.
\end{lem}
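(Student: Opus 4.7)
The plan is to prove this directly from the definition of the profinite completion $\widehat{M} = \varprojlim_{N \leq_f M} M/N$ as an inverse limit over finite-index submodules, exploiting the fact that inverse limits of surjective systems of finite abelian groups are surjective, together with the general principle that continuous images of compact sets in Hausdorff spaces are closed. The module structure plays essentially no role; the argument is really about profinite completions of abelian groups.

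First I would establish surjectivity of $\hat g: \widehat B \to \widehat C$. The key observation is that for each finite-index submodule $N \leq_f C$, the preimage $g^{-1}(N)$ is a finite-index submodule of $B$ (since $B/g^{-1}(N) \cong C/N$ is finite), and the induced map $B/g^{-1}(N) \to C/N$ is an isomorphism because $g$ is surjective. Hence the image of $\hat g$ surjects onto every finite quotient $C/N$ of $\widehat C$; since this image is also closed in $\widehat C$ (being the continuous image of the compact group $\widehat B$), it must equal $\widehat C$.

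Next I would prove exactness at $\widehat B$. The inclusion $\im(\hat f) \subseteq \ker(\hat g)$ follows at once from $\hat g \circ \hat f = \widehat{g\circ f} = 0$. For the reverse inclusion, I would compare both subgroups through their images in the finite pieces of $\widehat B$. Writing $b_M$ for the projection of $b \in \widehat B$ to $B/M$, one checks that $\im(\hat f)$ projects onto $(f(A) + M)/M$ in $B/M$ (using surjectivity of the natural map $\widehat A \to A/f^{-1}(M)$) and is closed in $\widehat B$ (being compact in a Hausdorff space); hence $b \in \im(\hat f)$ if and only if $b_M \in (f(A) + M)/M$ for every $M \leq_f B$. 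Given $b \in \ker(\hat g)$ and an arbitrary $M \leq_f B$, I would then consider $M' := M + f(A) = g^{-1}(g(M))$, which still has finite index in $B$ and contains $\ker g$. The assumption $b \in \ker(\hat g)$, applied at the level $N = g(M)$, forces $b_{M'} = 0$, i.e. $b_M$ lies in $M'/M = (f(A) + M)/M$, yielding $b \in \im(\hat f)$.

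The main subtle point is the verification that $\im(\hat f)$ is genuinely determined by its reductions to the finite pieces, rather than only its topological closure being so determined; this is really what the compact-Hausdorff closedness is buying, and without it the argument would only give $\ker(\hat g) = \overline{\im(\hat f)}$ and leave open whether the two coincide. Apart from that, everything is a direct diagram chase on inverse limits, and no assumption on $R$ is needed beyond the definitions, which is consistent with this being stated as a completely general fact.
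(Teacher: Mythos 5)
Your proof is correct. The paper does not actually prove this lemma; it simply cites \cite[Proposition 3.2.5]{RibesZalesskii}, and your argument is a faithful reconstruction of the standard proof behind that reference: surjectivity of $\widehat{B}\to\widehat{C}$ via the isomorphisms $B/g^{-1}(N)\cong C/N$ plus closedness of the image, and exactness at $\widehat{B}$ via the identification of the closed subgroup $\im(\widehat{f})$ with the inverse limit of its finite projections $(f(A)+M)/M$, matched against $\ker(\widehat{g})$ through the submodules $M'=M+f(A)=g^{-1}(g(M))$. You also correctly isolate the one genuinely non-formal point, namely that compactness is what upgrades $\ker(\widehat{g})=\overline{\im(\widehat{f})}$ to $\ker(\widehat{g})=\im(\widehat{f})$; without it the lemma would fail as stated. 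The only items left implicit are routine: that $\widehat{A}\to A/f^{-1}(M)$ and $\widehat{B}\to B/M$ are surjective (inverse limits of surjective systems of finite modules), and that the induced maps are $\widehat{R}$-linear, both of which are immediate from the definitions.
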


\begin{lem} \label{lem:alpha}Let $M$ be a right $R$-module and  $\alpha_M: M\otimes_R\widehat R \to  \widehat M$,  $m\otimes r \mapsto \delta_M(m)\cdot r$  the natural map of right $\widehat R$-modules.
	\begin{enumerate}
		\item If $M$ is finitely generated, then $\alpha_M$ is onto.
		\item If $M$ is finitely presented, then $\alpha_M$ is an isomorphism.
		\item A (right) $R$-module morphism $\gamma: M_1\to M_2$ induces the following commutative diagram
		$$\begin{array}{ccc}
		M_1\otimes_R \widehat R &\xrightarrow{\gamma\otimes \Id} & M_2\otimes_R\widehat R\\
		{\Big\downarrow}{\alpha_{M_1}} &&{\Big\downarrow}{\alpha_{M_2}}\\
		\widehat{M_1} &\xrightarrow{\widehat \gamma} &\widehat {M_2}\end{array} .$$
		%where $\im\widehat{\gamma}$ is the closure of $\im \gamma$ in $\widehat{M_2}$. 
	\end{enumerate}
	
\end{lem}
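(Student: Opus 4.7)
The plan is to prove the three parts in the order (3), (1), (2), since each builds on the previous. Part (3) is a formal check on elementary tensors: the upper path sends $m\otimes r$ to $\alpha_{M_2}(\gamma(m)\otimes r)=\delta_{M_2}(\gamma(m))\cdot r$, while the lower path sends it to $\widehat\gamma(\delta_{M_1}(m)\cdot r)=\widehat\gamma(\delta_{M_1}(m))\cdot r=\delta_{M_2}(\gamma(m))\cdot r$, where the last equality is the naturality of the completion, $\widehat\gamma\circ\delta_{M_1}=\delta_{M_2}\circ\gamma$, and the factorisation of $\widehat{R}$ out of $\widehat\gamma$ uses that $\widehat\gamma$ is $\widehat R$-linear. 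Both sides extend by biadditivity and the tensor relations, so the diagram commutes.

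For (1), I would first handle the case $M=R$: the map $\alpha_R: R\otimes_R\widehat R\to\widehat R$ is the canonical isomorphism $1\otimes r\mapsto r$. Next I would check that $\alpha_{R^n}$ is an isomorphism for each $n$, using that $-\otimes_R\widehat R$ commutes with finite direct sums and that profinite completion does too on $R$-modules. Given a finitely generated $M$, pick a surjection $\gamma:R^n\twoheadrightarrow M$. Part (3) gives the identity
\[
\alpha_M\circ(\gamma\otimes\Id)=\widehat\gamma\circ\alpha_{R^n}.
\]
The right-hand side is surjective because $\alpha_{R^n}$ is an isomorphism and $\widehat\gamma$ is surjective by Lemma \ref{indpro}; therefore $\alpha_M$ is surjective.

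For (2), fix a finite presentation $R^m\xrightarrow{\beta}R^n\xrightarrow{\gamma}M\to 0$. Applying $-\otimes_R\widehat R$ yields an exact top row; applying profinite completion yields an exact bottom row by Lemma \ref{indpro}. Two uses of part (3) give a commutative ladder in which the two leftmost vertical arrows $\alpha_{R^m}$ and $\alpha_{R^n}$ are isomorphisms by the previous step. A short four-lemma chase, using surjectivity of $\gamma\otimes\Id$ and injectivity at the two left squares, forces $\alpha_M$ to be an isomorphism.

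The only mildly delicate point, and the one I would treat most carefully, is the commutation of profinite completion with finite direct sums of $R$-modules invoked in the $R^n$ case: this reduces to noting that for any finite-index $R$-submodule $K\le M_1\oplus M_2$ the submodule $(K\cap M_1)\oplus(K\cap M_2)$ also has finite index and is contained in $K$, so the ``split'' submodules are cofinal in the inverse system defining the completion. Everything else is formal, once Lemma \ref{indpro} is available.
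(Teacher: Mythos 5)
Your proof is correct, and parts (2) and (3) follow the paper's argument exactly (a five-lemma chase on the tensored and completed presentation, and a formal check of naturality). The one genuine difference is part (1): the paper proves surjectivity directly by a topological argument --- writing $M=m_1R+\dots+m_dR$, the image $\alpha_M(M\otimes_R\widehat R)=\delta_M(m_1)\widehat R+\dots+\delta_M(m_d)\widehat R$ is a finite sum of compact, hence closed, submodules of $\widehat M$, and since it contains the dense subset $\delta_M(M)$ it must be all of $\widehat M$. You instead deduce (1) from the case $M=R^n$ together with right-exactness of completion (Lemma \ref{indpro}) and the naturality square of part (3). Both routes are valid; yours is more formal and makes part (1) depend on the commutation of profinite completion with finite direct sums, which you correctly justify by the cofinality of split finite-index submodules $(K\cap M_1)\oplus(K\cap M_2)\subseteq K$ (the paper needs that same fact anyway, but only for part (2)). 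The paper's version of (1) is marginally more self-contained and also illustrates the compactness phenomenon that recurs later (e.g.\ in the Baire-category step of Theorem \ref{teo:Rhatflat_iff_Rcoherent}), but nothing is lost in your argument.
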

\begin{proof}
	Suppose that $M$ is finitely generated, say $M=m_1R + \dots + m_dR$.
	Then $\alpha_M(M\otimes 1)=\delta_M(M)$ and $\alpha_M(M \otimes_R \widehat{R})=\delta_M(m_1)\widehat R + \dots + \delta_M(m_d)\widehat{R}$. 
	As each $\delta_M(m_i)\widehat{R}$ is closed in $\widehat M$, their sum is a closed subset of $\widehat M$ that contains  $\delta_M(M)$, so must be all of $\widehat M$. 
	
	Let $M$ be finitely presented, with free presentation $R^e\rightarrow R^d \rightarrow M \rightarrow 0$ and $e, d$ finite.
	Tensoring with $\widehat R$ and using Lemma \ref{indpro} we obtain the following commutative diagram with exact top and bottom rows:
	$$\begin{array}{cccccc}
	 R^e\otimes_R \widehat{R}& \rightarrow &  R^d\otimes_R \widehat{R} &\rightarrow & M\otimes_R \widehat{R}&\to 0\\
	{\big\downarrow}&& {\big\downarrow}&&{\Big\downarrow}{\scriptstyle\alpha_M}&\\
	\widehat{R}^e&\rightarrow  &\widehat{R}^d&\rightarrow & \widehat M&\to 0
	\end{array}.$$
	The two left downward arrows are isomorphisms, because tensor products and completions commute with finite direct sums.
	A diagram chase shows that $\alpha_M$ is also an isomorphism. 
	
	The last item is equivalent  to $\widehat \gamma$ being a  $\widehat R$-homomorphism, which  is clear. 
\end{proof}

	The next lemma shows that the condition that all finitely presented right $R$-modules are residually finite plays an analogous role to the Artin-Rees Lemma for $I$-adic completions of finitely generated modules over commutative Noetherian rings. 
	In particular, this lemma is classically used to show that if  $0\rightarrow A\rightarrow B\rightarrow C\rightarrow 0$ is a short exact sequence of finitely generated $R$-modules where $R$ is commutative and Noetherian and $I$ is some proper ideal of $R$, then the 
	 $I$-adic completion of $A$ is the same as the closure of $A$ in the $I$-adic topology of $B$.
	 This guarantees that the induced sequence $0\rightarrow \widehat{A}\rightarrow\widehat B\rightarrow \widehat C\rightarrow 0$ is exact. 
	 Moreover, it is also used to show that the $I$-adic completion of $R$ is flat as an $R$-module.

\begin{lem}\label{lem:fpMrf_implies_lerf}
	Let $R$ be a finitely generated ring such that all finitely presented right $R$-modules are residually finite. 
	Then, for every $d>0$ and every finitely generated submodule $L\leq R^d$, the profinite topology of $L$ coincides with the subspace topology from the profinite topology of $R^d$.
	
	In particular, given an exact sequence $0\to L\to R^d\to M\to 0$ of right $R$-modules with $L$ finitely generated,  the induced sequence
	$$0\to \widehat L\to \widehat R^d\to \widehat M\to 0$$ of right $\widehat R$-modules, is exact.
\end{lem}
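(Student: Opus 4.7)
My plan has two parts. First, I reduce the ``in particular'' claim to the topology statement. Applying Lemma \ref{indpro} to $0 \to L \to R^d \to M \to 0$ immediately gives exactness of $\widehat L \to \widehat R^d \to \widehat M \to 0$; what remains is injectivity of $\widehat L \to \widehat R^d$. The image of this map is the closure $\overline L$ of $L$ inside $\widehat{R^d}$ (by a compactness argument, since $\widehat L$ is compact and $L$ is dense in its image), and $\overline L$ is naturally identified with the completion of $L$ with respect to the subspace topology inherited from $R^d$. Hence injectivity of $\widehat L \to \widehat R^d$ is equivalent to the profinite and subspace topologies on $L$ agreeing, which is the first claim.

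For the topology claim, the subspace topology is automatically coarser, since $L \cap N$ is a finite-index submodule of $L$ for any finite-index $N \leq R^d$. I therefore need: given any finite-index $L_0 \leq L$, some finite-index $N \leq R^d$ satisfies $L \cap N \leq L_0$. To construct $N$, I would use the two-sided ideal $I := \Ann_R(L/L_0)$, which has finite index in $R$ because $R/I$ embeds into the finite ring $\End_\Z(L/L_0)$. By construction $L \cdot I \subseteq L_0$. The target is to show $L \cap R^d I^k \leq L_0$ for some $k$, whence $N := R^d I^k$ works; equivalently, I would argue that the decreasing chain of images of $L \cap R^d I^k$ inside the finite module $L/L_0$ stabilizes to zero.

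The main obstacle is this Artin--Rees-type step. In the coherent case it would follow from the classical Artin--Rees lemma, but here coherence is not assumed, so one must leverage the hypothesis that all finitely presented right $R$-modules are residually finite. A natural approach is to approximate $L_0$ by finitely generated submodules $L_0' \leq L_0$: then $R^d/L_0'$ is finitely presented and hence RF by hypothesis, so for any lift $\ell \in L \setminus L_0$ of a nonzero coset in $L/L_0$, one can find a finite-index submodule $U \leq R^d$ with $L_0' \leq U$ and $\ell \notin U$. The delicate step is then to upgrade $U$ to contain all of $L_0$ without acquiring $\ell$; this is the technical heart of the argument, where the finiteness of $L/L_0$ and the structure of the annihilator $I$ will be used to ensure that finitely many such upgrades can be carried out simultaneously.
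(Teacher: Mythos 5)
Your reduction of the ``in particular'' statement to the topology claim is correct and matches the paper, and the easy direction (the subspace topology on $L$ is coarser than the profinite one) is fine. But the core of the lemma --- producing, for each finite-index $L_0\leq L$, a finite-index $N\leq R^d$ with $L\cap N\leq L_0$ --- is not actually proved: both of your suggested routes (the Artin--Rees-type containment $L\cap R^dI^k\leq L_0$, and the ``upgrade $U$ from a finitely generated $L_0'\leq L_0$ to all of $L_0$'' step) are explicitly left open, and you yourself flag them as the technical heart. Neither is likely to go through as stated: without coherence or Noetherianity there is no reason for the $I$-adic filtration to be cofinal with the induced one, and that is precisely the classical argument the residual-finiteness hypothesis is meant to replace.

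The missing observation is much simpler: $L_0$ itself is finitely generated as an $R$-module, so there is nothing to approximate. Indeed, $L$ is finitely generated and $L_0$ has finite additive index in $L$; since $R$ is a finitely generated ring, a finite-index submodule of a finitely generated module is again finitely generated (for instance, $L\cdot \Ann_R(L/L_0)\leq L_0$ has finite index in $L$ and is finitely generated once one checks that the finite-index ideal $\Ann_R(L/L_0)$ is finitely generated as a right ideal). Hence $R^d/L_0$ is finitely presented and therefore residually finite by hypothesis. One then separates directly: for each of the finitely many elements $\ell\in L\setminus L_0$ representing the nontrivial cosets of $L_0$ in $L$, choose a finite-index submodule of $R^d$ containing $L_0$ and avoiding $\ell$; the intersection $U$ of these finitely many submodules has finite index in $R^d$, contains $L_0$, and satisfies $U\cap L=L_0$. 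This is exactly the paper's argument, and it requires no powers of the annihilator and no upgrading step.
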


\begin{proof}
	The second statement follows directly from the first one and Lemma \ref{indpro}, because the kernel of the map to $\widehat M$ is the closure of $L$ in $R^d$.

	To prove the first statement, let $L\leq R^d$ be a finitely generated submodule. 
	Then $R^d/L$ is finitely presented, so, by assumption, residually finite. 
	In other words, $L$ is the intersection of all finite index submodules of $R^d$ that contain $L$. 
	We need to show that for every $K\leq L$ of finite index, there is some $U\leq R^d$ of finite index such that $L\cap U\leq K$. 
	Since $R$ is a finitely generated ring, $K$ is a finitely generated submodule. 
	Therefore $R^d/K$ is residually finite and, as $L/K$ is finite, there is some $U\geq K$ of finite index in $R^d$ avoiding any set of non-trivial coset representatives of $K$ in $L$.  
	This means that $U\cap L=K$, as required. 
%		
%	
%	Let $M\leq R^d$ be a finitely generated submodule. 
%	Then $R^d/M$ is finitely presented, so, by assumption, residually finite. 
%	We need to show that $M=\bigcap\{U: U\in \mathcal{U}\}=:N$ where $\mathcal{U}$ is the set of all submodules of finite index (as subgroup) in  $R^d$ that contain $M$. 
%	Suppose that $x\in N\setminus M$. 
%	Since $R^d/M$ is residually finite, there exists some submodule $U/M$ of $R^d/M$ such that $U/M$ has finite index in $R^d/M$ and the image of $x$ in $R^d/M$ is not in $U/M$. 
%	But this means that $U\in \mathcal{U}$ and $x\notin U$, which is a contradiction as $N\leq U$.
%	
%	To show the second claim, we must prove that given any submodule $L\leq M$ of finite index there exists $U\leq R^d$ of finite index such that $M\cap U\leq L$. 
%	Since $R$ is finitely generated, $L$ is also finitely generated, so $R^d/L$ is residually finite. 
%	Since $M/L$ is finite, there is some $U\geq L$ of finite index in $R^d$ such that $M\cap U=L$. 
\end{proof}

%\begin{teo}\label{teo:Rhatflat_iff_Rcoherent}
%	Let $R$ be a residually finite countable ring such that all finitely presented $R$-modules are residually finite. 
%	The following are equivalent:
%	\begin{enumerate}
%		\item\label{it:Rhatflat+FPMRF} The profinite completion $\widehat{R}$ of $R$ is right flat. 
%		%	\item\label{it:Rhat/R_flat} The right $R$-module $\widehat{R}/R$ is flat. 
%		\item\label{it:Rcoherent+FPMRF} $R$ is left coherent.
%	\end{enumerate}
%\end{teo}

\begin{proof}[Proof of Theorem \ref{teo:Rhatflat_iff_Rcoherent}]
	\ref{it:Rhatflat+FPMRF} $\implies$ \ref{it:Rcoherent+FPMRF}.
	Let $L$ be a finitely generated right submodule of a free right module $R^k$. 
	 We want to show that $L$ is also finitely presented.
	  Consider the following exact sequence of right $R$-modules
	$$0\to L\to R^k\to M\to 0.$$
	Since $\widehat R$ is left flat and taking into account Lemma \ref{indpro} we obtain the commutative diagram
	
	$$\begin{array}{cccccccccc}
	0 &\to &  L \otimes_R \widehat{R} &  \rightarrow & R^k \otimes_R \widehat{R}&\rightarrow&  M \otimes_R \widehat{R}&\to 0\\
	& &{\Big\downarrow}{\scriptstyle\alpha_L}&& {\Big\downarrow}{\scriptstyle\alpha_{R^k}}&&{\Big\downarrow}{\scriptstyle\alpha_M}&\\
	& & \widehat L&\xrightarrow{\beta}  &\widehat R^k&\rightarrow& \widehat M&\to 0.
	\end{array}$$		
	By Lemma \ref{lem:alpha}, $\alpha_L$ is onto. Since $\alpha_{R^k}$ is bijective, the commutativity of the diagram implies that $\alpha_L$ should  be injective, and so, bijective.

	 Assume that $L$ is generated by $d$ elements. Therefore we have the following short exact sequence of right $R$-modules
	$$0\to I \to R^d \to L\to 0.$$
	
	Tensoring with $\widehat{R}$ and noting that $\widehat{R}$ is flat gives the short exact sequence
	$$0\to I\otimes_R \widehat{R} \rightarrow R^d\otimes_R \widehat{R} \rightarrow  L\otimes_R \widehat{R} \to 0.$$
	
	Let $\overline I$ be the closure of $I$ in $\widehat R^d$ and denote by $\overline {\alpha_I}:   I\otimes_R \widehat{R} \to \overline I$ the composition of $\alpha_I$ and the map $\widehat I\to \overline I$.
	 Then we also have, upon taking completions and applying Lemmas \ref{indpro} and \ref{lem:alpha} the commutative diagram
	$$\begin{array}{cccccccccc}
	0 &\to &  I\otimes_R \widehat{R} &  \rightarrow & R^d\otimes_R \widehat{R} &\rightarrow& L\otimes_R \widehat{R}&\to 0\\
	& &{\Big\downarrow}{\scriptstyle\overline{\alpha_I}}&& {\Big\downarrow}{\scriptstyle\alpha_{R^d}}&&{\Big\downarrow}{\scriptstyle\alpha_L}&\\
	0&\to & \overline I&\rightarrow  &\widehat R^d&\rightarrow & \widehat L&\to 0
	\end{array}$$		
	where  $\alpha_{R^d}$ and $\alpha_L$ are isomorphisms and $\gamma, \phi$ are continuous.  
	The commutativity of the diagram implies that $\overline{\alpha_I}$ is also an isomorphism.  %In particular, $\im \overline{\alpha_I}$ is closed.
	
	As $R$ is countable, $I$ is also countable.
	Enumerate the elements of $I=\{i_k\}_{k\in \N}$ and define, for each  $k\in \N$ the sets 
	$$I_k=\sum_{j=1}^ki_jR \text{ and } T_k= \sum_{j=1}^k i_j\otimes \widehat{R}.$$ 
	Thus,  $\overline I=\cup_{k\in \N}\overline{\alpha_I}(T_k)$ is a countable union of compact submodules. 
	Since  $\overline I$ is compact, by the Baire category theorem,  there exists $n\in \N$ such that $\overline{\alpha_I}(T_n)$ contains an open subset of  $\overline I$.
	 Hence for some $m\in \N$, $\overline{\alpha_I} (T_m)=\overline I$. 
	This implies  that the closure of $I_m=\overline{ \alpha_I}( I_m\otimes 1)$ in $R^k$ contains $I=\overline{\alpha_I}(I\otimes 1)$. 
	As $R^k/I_m$ is finitely presented, it is  residually finite,  by assumption on $R$.
	In other words, $I_m$ is closed in $R^k$, which yields $I=I_m$ and thus $L$ is finitely presented.

	 \ref{it:Rcoherent+FPMRF}$\implies $\ref{it:Rhatflat+FPMRF}.
	  We wish to show that $\Tor_1^R(M,\widehat{R})=0$ for every right $R$-module $M$. 
	Since every module is a direct limit of finitely presented modules, and $\Tor_1^R$ commutes with direct limits (in both variables), we may assume without loss of generality that $M$ is finitely presented.  
	Suppose that $M$ is defined by the short exact sequence 
	$$0\to L \to R^d \to M  \to 0$$
	where $L$ is finitely generated. 
	As $R$ is right coherent, $L$ is in fact finitely presented.
	Tensoring this exact sequence with $\widehat{R}$ we obtain the long exact sequence	
	$$0 \to \Tor_1^R(M,\widehat{R}) \to   L\otimes_R \widehat{R} \xrightarrow{i}  R^d \otimes_R \widehat{R}  \xrightarrow{p} M\otimes_R \widehat{R}\to 0. $$
	Then by Lemmas \ref{lem:alpha} and \ref{lem:fpMrf_implies_lerf}, we obtain the following commutative diagram with exact top and bottom rows	
	$$\begin{array}{cccccccccc}
	0 & \to \Tor_1^R(M,\widehat R)&\xrightarrow{\beta} & L\otimes_R \widehat{R} &\xrightarrow{i}&  R^d \otimes_R \widehat{R} &\xrightarrow{p} & M \otimes_R \widehat{R}&\to 0\\
	 & & &{\Big\downarrow}{\scriptstyle\alpha_L}&& {\Big\downarrow}{\scriptstyle\alpha_{R^d}}&&{\Big\downarrow}{\scriptstyle\alpha_M}&\\
	 & 0&\to & \widehat L&\xrightarrow{\eta}  &\widehat R^d&\xrightarrow{\pi} & \widehat M&\to 0,
	\end{array}$$
	where  $\alpha_L$,
	$\alpha_M$ and $\alpha_{R^d}$ are isomorphisms, because the right $R$-modules   $L$, $M$ and $R^d$ are finitely presented. 
	A diagram chase shows that   $\Tor_1^R(M,\widehat R)=0$, as required. 
 \end{proof}

 The condition on $R$ ``each finitely presented module  is residually finite" can also be encoded in a condition of flatness of some module. 

\begin{pro}\label{pro:profinitely_flat_iff_coherent_and_fprf}
	Let $R$ be a finitely generated  residually finite ring.   
	The following are equivalent:
	\begin{enumerate}
		\item the quotient module $\widehat{R}/R$ is left flat;
		\item every finitely presented right $R$-module is residually finite and  $\widehat{R}$ is left flat;
		\item\label{it:fprf+coherent} every finitely presented right $R$-module is residually finite and  $R$ is right coherent.
	\end{enumerate}
\end{pro}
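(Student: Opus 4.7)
The plan is to derive $(2)\Leftrightarrow (3)$ immediately from Theorem \ref{teo:Rhatflat_iff_Rcoherent}, and then to establish $(1)\Leftrightarrow (2)$ by homological algebra applied to the short exact sequence of $R$-bimodules
\[
0 \to R \to \widehat R \to \widehat R/R \to 0,
\]
which makes sense because $R$ is residually finite. For any right $R$-module $M$, tensoring over $R$ on the right yields a long exact $\Tor$-sequence. Since $R$ is flat over itself, $\Tor_i^R(M,R)=0$ for $i\ge 1$, so this sequence collapses into isomorphisms $\Tor_i^R(M,\widehat R)\cong \Tor_i^R(M,\widehat R/R)$ for $i\ge 2$ and the four-term exact sequence
\[
0 \to \Tor_1^R(M,\widehat R) \to \Tor_1^R(M,\widehat R/R) \to M \xrightarrow{\iota_M} M\otimes_R \widehat R,
\]
where $\iota_M(m)=m\otimes 1$. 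All the content of $(1)\Leftrightarrow (2)$ will be read off from this exact sequence combined with Lemma \ref{lem:alpha}, which identifies $M\otimes_R \widehat R$ with $\widehat M$ via $\alpha_M$ when $M$ is finitely presented, and under which $\alpha_M\circ \iota_M=\delta_M$.

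For $(1)\Rightarrow (2)$, I would assume $\widehat R/R$ is left flat, so that the middle term of every instance of the four-term sequence vanishes. The collapsed isomorphisms then give $\Tor_i^R(M,\widehat R)=0$ for all $i\ge 2$, and the exact sequence gives $\Tor_1^R(M,\widehat R)=0$; hence $\widehat R$ is left flat. The same exact sequence also forces $\iota_M$ to be injective for every $M$. Taking $M$ finitely presented and composing $\iota_M$ with the isomorphism $\alpha_M$ of Lemma \ref{lem:alpha} shows that $\delta_M\colon M\to \widehat M$ is injective; that is, $M$ is residually finite.

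For $(2)\Rightarrow (1)$, I would assume $\widehat R$ is left flat and that every finitely presented right $R$-module is residually finite. Then $\Tor_i^R(M,\widehat R)=0$ for $i\ge 1$, so the isomorphisms yield $\Tor_i^R(M,\widehat R/R)=0$ for $i\ge 2$, and the four-term sequence reduces to $\Tor_1^R(M,\widehat R/R)=\ker \iota_M$. To show this kernel vanishes for arbitrary $M$, I would write $M$ as a directed colimit of finitely presented modules and use that both $-\otimes_R \widehat R$ and $\Tor_1^R(-,\widehat R/R)$ commute with directed colimits, thereby reducing to the finitely presented case. There, Lemma \ref{lem:alpha} gives $\alpha_M\circ \iota_M=\delta_M$ with $\alpha_M$ an isomorphism, so residual finiteness of $M$ makes $\delta_M$, and hence $\iota_M$, injective.

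No serious obstacle arises: once the long exact $\Tor$-sequence above is written down, everything follows from Lemma \ref{lem:alpha}, the flatness of $R$ over itself, and the standard compatibility of $\Tor_1$ with directed colimits. The only place requiring a little care is the reduction to finitely presented modules in the proof of $(2)\Rightarrow (1)$, and the only non-bookkeeping input is Theorem \ref{teo:Rhatflat_iff_Rcoherent}, which packages $(2)\Leftrightarrow (3)$.
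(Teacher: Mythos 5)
Your proposal is correct and follows essentially the same route as the paper: both derive $(2)\Leftrightarrow(3)$ from Theorem \ref{teo:Rhatflat_iff_Rcoherent} and then read $(1)\Leftrightarrow(2)$ off the $\Tor$-sequence obtained by tensoring $0\to R\to\widehat R\to\widehat R/R\to 0$ with a finitely presented $M$, using Lemma \ref{lem:alpha} to identify $\iota_M$ with $\delta_M$ up to the isomorphism $\alpha_M$. Your explicit reduction to finitely presented modules via directed colimits in $(2)\Rightarrow(1)$ is a point the paper leaves implicit, but it is the standard flatness criterion and changes nothing of substance.
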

\begin{proof}
	The last two items are equivalent by Theorem \ref{teo:Rhatflat_iff_Rcoherent}, so we show that the first two are equivalent. 
	Assume that  $\widehat R/R$ is left flat.
	 Since $R$ is also flat, $\widehat R$ is flat as well.
	 %extensions of flat modules are flat
	Let $M$ be a finitely presented right $R$-module. 
	Tensoring the short exact sequence $$0\to R\to \widehat R \to \widehat{R}/R \to 0$$ with $M$
	 yields the following commutative diagram with exact upper sequence:	
	$$\begin{array}{cccccccccc}
	\Tor_1^R(M,\widehat R) & \to &\Tor^R_1(M,\widehat R/R)&\to &M\otimes_R  R&\xrightarrow{\gamma} & M\otimes_R \widehat{R} &\to  & M\otimes_R (\widehat{R}/R) &\to 0\\ &&&
	& {\Big\downarrow}{\scriptstyle\sigma}&&{\Big\downarrow}{\scriptstyle\alpha_M}&& &\\
	&&&& M&\xrightarrow{\delta_M}  &\widehat M&  &  &
	\end{array}$$
	where $\gamma$ and $\sigma$    are the canonical maps.
	 By Lemma \ref{lem:alpha}, $\alpha_M$ is an isomorphism. 
	 It is obvious that the diagram commutes. 
	Since $\widehat{R}$ and $\widehat R/R$ are flat, $\Tor_1^R(M,\widehat{R})= \Tor^R_1(M,\widehat R/R)=0$.
	 Thus, $\delta_M$ is an embedding, and so $M$ is residually finite.
	
	Now assume that every finitely presented right $R$-module is residually finite and $\widehat R$ is left flat. 
	This means that in the previous diagram $\Tor_1^R(M,\widehat{R})=0$ and, as $\delta_M $ is an embedding, $\Tor_1^R(M,\widehat{R}/R)=0$ as well.	
\end{proof}

\begin{rem} Let $G$ be a topologically finitely generated profinite group and $R=\F_p[[G]]$. 
	By the result of Nikolov and Segal \cite{NS07}, the  map $\delta_R: R\to \widehat R$ is an isomorphism.
	So $\widehat{R}/R=0$ is flat, but obviously $R$ is not always coherent.
	Thus,  the  condition  that $R$ is finitely generated  is  not superficial. 
\end{rem}

Recall that $\F_p[[\widehat{G}]]\cong \widehat{\F_p[G]}$ for any group $G$. 
 We will see in  the  next  section that, if $G$  is a  virtually free group,  then $\F_p[[\widehat{G}]]$   is flat, for every prime $p$.
 Although we will not need it in the rest of the paper, it is interesting to note that the same is true for virtually polycylic groups:

\begin{pro}\label{pro:polycyclic_prof_flat}
	Let $G$ be a virtually polycyclic group and $p$ a prime number, then $\F_p[G]$ satisfies the conditions of Theorem \ref{teo:Rhatflat_iff_Rcoherent} and of Proposition \ref{pro:profinitely_flat_iff_coherent_and_fprf}. 
	In particular, $\F_p[[\widehat{G}]]$   is a  flat left $\F_p[G]$-module. 
\end{pro}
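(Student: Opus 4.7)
The plan is to verify, for the ring $R=\F_p[G]$, the hypotheses of Theorem \ref{teo:Rhatflat_iff_Rcoherent}: $R$ is a finitely generated residually finite ring, $R$ is right coherent, and every finitely presented right $R$-module is residually finite. Granted these, Theorem \ref{teo:Rhatflat_iff_Rcoherent} immediately yields that $\widehat R = \F_p[[\widehat G]]$ is a flat left $R$-module, and the equivalent conditions of Proposition \ref{pro:profinitely_flat_iff_coherent_and_fprf} follow automatically.

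Three of the four conditions are routine. Since $G$ is finitely generated, so is $R$ as a ring. Since virtually polycyclic groups are residually finite (Hirsch), the natural projections $R \twoheadrightarrow \F_p[G/N]$ for $N \trianglelefteq G$ of finite index make $R$ residually finite. By P.\,Hall's celebrated theorem on group algebras of polycyclic-by-finite groups, $R$ is two-sided Noetherian; in particular $R$ is right coherent, and every finitely presented right $R$-module is simply a finitely generated one, so it suffices to check residual finiteness of finitely generated right $R$-modules.

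The substantive input is precisely this residual finiteness, and it is where the deep work of Jategaonkar \cite{Jategaonkar} and Roseblade \cite{Roseblade73, Roseblade} enters. Their results establish that for virtually polycyclic $G$ the group algebra $\F_p[G]$ is a Jacobson ring in which every simple right module is finite, and---more importantly---that the Jacobson radical of any finitely generated right $R$-module is zero. Combining these statements produces the required residual finiteness: each finitely generated $M$ embeds into the product $\prod_{N}M/N$ ranging over all maximal submodules $N$, and each quotient $M/N$ is a finite simple $R$-module. With all hypotheses in place, Theorem \ref{teo:Rhatflat_iff_Rcoherent} delivers the claimed flatness. The main obstacle in this plan lies not in any new argument but in the careful and faithful extraction of module-level residual finiteness from the Jategaonkar--Roseblade theory in the non-commutative Noetherian setting; once this is in hand, the rest of the proof is purely formal.
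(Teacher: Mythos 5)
Your overall strategy coincides with the paper's: Hall's theorem that $\F_p[G]$ is Noetherian gives right coherence (and reduces ``finitely presented'' to ``finitely generated''), the routine conditions are as you say, and the substantive input is indeed the residual finiteness of finitely generated right $\F_p[G]$-modules, attributed to Jategaonkar and Roseblade. However, the specific mechanism you propose for that last step contains a genuine error: it is \emph{not} true that the Jacobson radical of every finitely generated right $R$-module is zero. Already for $G=\Z$ and $R=\F_p[t,t^{-1}]$, the module $M=R/(t-1)^2R\cong \F_p[x]/(x^2)$ is finitely generated with unique maximal submodule $(x)M\neq 0$, so $M$ does not embed into a product of its simple quotients. (It is of course finite, hence residually finite --- which illustrates that ``residually finite'' is strictly weaker than ``residually simple'', and only the former is what you need.) Roseblade's theorem that simple $\F_p[G]$-modules are finite is correct, but it cannot be combined with a vanishing-radical statement that is false.

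The result you should be quoting (and the one the paper uses) is the Jategaonkar--Roseblade solution to P.~Hall's problem: every finitely generated abelian-by-polycyclic-by-finite \emph{group} is residually finite. Given a finitely generated right $R$-module $M$, the semidirect product $M\rtimes G$ is such a group, hence residually finite; for $0\neq m\in M$ choose a finite-index normal subgroup $K\unlhd M\rtimes G$ with $m\notin K$, and observe that $K\cap M$ is a $G$-invariant (hence $R$-submodule) subgroup of finite index in $M$ avoiding $m$. This yields the residual finiteness of $M$ as a module directly, without any appeal to maximal submodules, and the rest of your argument then goes through as you describe.
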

\begin{proof}
	This is a corollary of a result proved independently by Jategaonkar \cite{Jategaonkar} and Roseblade \cite{Roseblade} (based on a previous result of Roseblade \cite{Roseblade73}), answering a question of P. Hall; namely, every finitely generated abelian-by-polycylic-by-finite group is residually  finite (see  also \cite[7.2.1]{LennoxRobinson} for a proof).
	This implies that every finitely generated right $R$-module is residually finite, where $R=\F_p[G]$ for prime $p$ and $G$ is virtually polycyclic.
	It follows from \cite[Theorem 1]{PHall1954} that $k[G]$ is right Noetherian (and therefore right coherent) whenever $k$ is right Noetherian and $G$ is virtually polycyclic. 
	Since $R$ is finitely generated, because $G$ is, we conclude that $R$ enjoys the properties in Theorem \ref{teo:Rhatflat_iff_Rcoherent} and in Proposition \ref{pro:profinitely_flat_iff_coherent_and_fprf}(\ref{it:fprf+coherent}, so $\widehat{R}\cong \F_p[[\widehat{G}]]$ is a flat left $\F_p[G]$-module. 
\end{proof}

\section{Completed group algebras of completions of virtually free groups}\label{sec:limits_projective_modules}
%\todo{Which bits exactly are the ones that only hold for free groups? }

We now apply the results of the previous section, together with results from \cite{CoFR}, to show that the completed group algebra $\F_p[[\widehat{G}]]$ of a finitely generated virtually free group is a direct union of free $\F_p[G]$-modules. 
This will be a key ingredient in the next section, to show that if $\widehat{G}$ splits as a profinite free product, then so does $G$.

\begin{definition}
A \textbf{free left ideal ring}, or\textbf{ left fir} for short, is a ring in which all left ideals are free left $R$-modules of unique rank.
 \textbf{Right firs} are defined correspondingly and a \textbf{fir} is a left and right fir. 
\end{definition}
%By \cite[Corollary 1.2]{CoFR}, a left fir  has invariant basis number.
 %In particular, it is always an integral domain.
 Note that if $R$ is a fir then %it is both left and right \textbf{coherent} -- 
 every finitely generated right (or left) ideal is a finitely presented right (or left) $R$-module. 
 
 \begin{pro}[\cite{CoFR} Theorem A.6 and Corollary A.7, p. 554]\label{prop:coherent_equivalent}
 For a ring $R$, the following are equivalent:
 \begin{enumerate}
 \item  $R$ is right coherent;
 \item every finitely generated right ideal of $R$ is a finitely presented right $R$-module.
% \item  the direct product of any family of flat right $R$-modules is flat;
% \item every direct power $R^I$ is right flat;
% \item  the left annihilator of any matrix over $R$ is finitely generated, i.e. given $A\in \Mat(n\times m, R)$ if $BA=0$ for some $B\in \Mat(I\times n, R)$ there exist $C\in \Mat(r\times n, R), D\in \Mat(I\times r, R)$ such that $B=DC$ and $CA=0$.
\end{enumerate}
 
 Furthermore, over a right coherent ring, the intersection of any pair of finitely generated submodules of a free right module is finitely generated.
\end{pro}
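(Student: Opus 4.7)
The plan is to prove the equivalence by the trivial direction first and then attack (2)$\implies$(1) by induction on the rank of the ambient free module. The implication (1)$\implies$(2) is immediate, since any right ideal is a submodule of the free right module $R$, so if $R$ is right coherent then finitely generated right ideals are finitely presented. The real content is in (2)$\implies$(1).

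For (2)$\implies$(1), I would argue as follows. Let $M$ be a finitely generated submodule of a free right $R$-module; by choosing a finite generating set of $M$ and enlarging the free module if necessary, I may assume $M \leq R^n$ for some $n$, and I induct on $n$. For the base case $n=1$, $M$ is a finitely generated right ideal, hence finitely presented by hypothesis (2). For the inductive step, consider the projection $\pi \colon R^n \to R$ onto the last coordinate and its restriction $\pi|_M \colon M \to R$. The image $\pi(M)$ is a finitely generated right ideal, hence finitely presented by (2), while the kernel $K := \ker(\pi|_M)$ sits inside $R^{n-1}$.

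The technical pivot is the classical fact that if one has an exact sequence
\[
0 \to K \to F \to L \to 0
\]
with $F$ free of finite rank and $L$ finitely presented, then $K$ is finitely generated; this is a direct application of Schanuel's lemma (comparing any such sequence with a finite presentation of $L$, one obtains $K \oplus F' \cong K' \oplus F$ with $K'$ finitely generated, forcing $K$ to be finitely generated). Applying this with $F$ a finite free cover of $M$ mapping onto $\pi(M) = L$, one deduces that the kernel of the composite $F \twoheadrightarrow M \twoheadrightarrow \pi(M)$ is finitely generated; a short diagram chase then yields that $K$ itself is finitely generated. By the inductive hypothesis applied to $K \leq R^{n-1}$, the module $K$ is finitely presented, and then the short exact sequence
\[
0 \to K \to M \to \pi(M) \to 0
\]
exhibits $M$ as an extension of two finitely presented modules, hence finitely presented, as desired. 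The main (though still routine) obstacle is making sure the ``finitely generated kernel'' step is handled cleanly; one must be careful to lift generators through the right maps, which is exactly what Schanuel's lemma packages.

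For the ``furthermore'' statement, let $A,B$ be finitely generated submodules of a free right $R$-module $F$. Then $A+B$ is finitely generated, hence finitely presented by the first part of the proposition. Consider the short exact sequence
\[
0 \to A \cap B \to A \oplus B \xrightarrow{(a,b)\mapsto a-b} A + B \to 0.
\]
Since $A \oplus B$ is finitely generated and $A+B$ is finitely presented, the same Schanuel-style argument used above shows that $A \cap B$ is finitely generated, completing the proof.
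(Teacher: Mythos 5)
Your argument is correct. Note first that the paper does not prove this proposition at all: it is quoted from Cohn's \emph{Free rings and their relations} (Theorem A.6 and Corollary A.7), so there is no in-paper proof to compare against; what you have supplied is a self-contained proof of a cited background fact. Your route is the standard one: (1)$\implies$(2) is immediate, and for (2)$\implies$(1) you induct on the rank $n$ of the ambient free module, splitting a finitely generated $M\leq R^n$ via the last-coordinate projection into a finitely generated ideal $\pi(M)$ and a kernel $K\leq R^{n-1}$. The two technical ingredients you invoke --- that the kernel of a surjection from a finite free module onto a finitely presented module is finitely generated (Schanuel), and that an extension of a finitely presented module by a finitely presented module is finitely presented --- are both correct and correctly deployed; in particular your passage from the kernel $K'$ of the composite $F\twoheadrightarrow M\twoheadrightarrow\pi(M)$ to $K$ itself works because $K$ is the image of $K'$ in $M$, hence a quotient of a finitely generated module. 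The ``furthermore'' via the sequence $0\to A\cap B\to A\oplus B\to A+B\to 0$ is likewise the standard argument and is complete. One cosmetic remark: to reduce to $M\leq R^n$ you should say that the finitely many generators of $M$ involve only finitely many basis elements of the ambient free module, so $M$ lies in a finite-rank free direct summand (you ``shrink'' rather than ``enlarge''); this does not affect correctness.
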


%\begin{definition}
%A finitely generated residually finite group $G$ is called {\bf $p$-profinitely flat} if $\F_p[G]$ is right profinitely flat; it is  {\bf profinitely flat} if $\Z[G]$ is right profinitely flat. 
%\end{definition}

% Notice that $\widehat{\F_p[G]}\cong \F_p[[\widehat G]]$ and $\widehat{\Z[G]}\cong \widehat {\Z}[[\widehat G]]$. We leave the study of these new notions for future research. 
% In this paper we will only need the following result.

\begin{pro}
	Let $F$ be a finitely generated free group and $p$ a prime.
	Then $\F_p[[\widehat{F}]]$ is a flat left $\F_p[F]$-module.
\end{pro}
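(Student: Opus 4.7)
The plan is to apply Theorem~\ref{teo:Rhatflat_iff_Rcoherent} with $R = \F_p[F]$. This reduces the problem to verifying four hypotheses: (i) $R$ is finitely generated as a ring, (ii) $R$ is residually finite, (iii) $R$ is right coherent, and (iv) every finitely presented right $R$-module is residually finite.

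Hypotheses (i) and (ii) are straightforward. A finite free basis of $F$ together with its inverses generates $R$ as a ring, which gives (i). For (ii), a nonzero element of $R$ involves only finitely many elements of $F$, and the residual finiteness of $F$ lets us separate these in some $\F_p[F/N]$ for a normal subgroup $N \lhd F$ of finite index.

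For (iii), I would invoke P.\,M.\ Cohn's classical theorem (see \cite{CoFR}) that the group algebra $k[F]$ of any free group $F$ over any skew field $k$ is a (two-sided) free ideal ring. In particular, every finitely generated right ideal of $R$ is a free right $R$-module, hence finitely presented, and Proposition~\ref{prop:coherent_equivalent} then yields that $R$ is right coherent.

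Hypothesis (iv) will be the main obstacle. A finitely presented right $R$-module has the form $M = R^n/K$ with $K$ finitely generated, and by the free ideal ring property $K$ is itself free of some finite rank $k$. Residual finiteness of $M$ is equivalent to $K$ being closed in the profinite topology on $R^n$: for every $x \in R^n \setminus K$ one needs a normal subgroup $N \lhd F$ of finite index with $x \notin K + I_N R^n$, where $I_N$ denotes the kernel of the canonical map $R \to \F_p[F/N]$. I plan to establish this by combining the matrix-reduction (``weak algorithm'') machinery for firs from \cite{CoFR}, which allows the inclusion $K \hookrightarrow R^n$ to be represented by a $k \times n$ matrix of a controlled form involving only finitely many elements of $F$, with M.\ Hall's theorem that $F$ is subgroup separable (LERF), which supplies a finite quotient $F/N$ simultaneously separating those finitely many elements together with the support of $x$. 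Once all four hypotheses are verified, Theorem~\ref{teo:Rhatflat_iff_Rcoherent} immediately delivers the conclusion that $\widehat{R} \cong \F_p[[\widehat{F}]]$ is a flat left $\F_p[F]$-module.
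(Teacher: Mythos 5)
Your reduction to Theorem \ref{teo:Rhatflat_iff_Rcoherent} is exactly the paper's strategy, and hypotheses (i)--(iii) are handled correctly: $\F_p[F]$ is a fir, hence every finitely generated right ideal is free and finitely presented, and Proposition \ref{prop:coherent_equivalent} gives right coherence. The gap is in (iv), which you rightly identify as the main obstacle but do not actually close. Your plan is to choose, via LERF, a finite-index normal subgroup $N\lhd F$ that separates the finitely many group elements occurring in the presentation matrix of $K\hookrightarrow R^n$ and in the support of $x$, and to conclude $x\notin K+I_NR^n$. This does not follow: keeping those finitely many elements distinct in $F/N$ says nothing about how large the image of $K$ becomes in $\F_p[F/N]^n$. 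Already for $n=1$ and $K=wR$ principal, the image $\bar w\,\F_p[F/N]$ can be the whole algebra --- $\bar w$ can perfectly well become a unit of the finite-dimensional algebra $\F_p[F/N]$ even when the support of $w$ injects into $F/N$ --- in which case every $x$ lies in $K+I_N$. So the finite quotient must be chosen with far more care than mere support separation, and the weak algorithm by itself does not produce it; it gives normal forms and the fir property, not profinite closedness of finitely generated ideals.

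The paper closes exactly this gap by quoting a nontrivial external result: Rosenmann--Rosset \cite[Theorem 4.3]{RR94}, which states that for a finitely generated right ideal $I$ of $R=\F_p[F]$ and any $z\in R\setminus I$ there is a finitely generated right ideal $L\supseteq I$ with $z\notin L$ and $R/L$ finite. The general case $R^n/K$ is then handled by induction on $n$: one splits off the first coordinate $R_1\le R^n$, treats separately the cases $z\notin R_1+K$ (pass to $R^{n-1}$) and $z\in R_1+K$ (use $R_1+K/K\cong R_1/(R_1\cap K)$, where coherence via Proposition \ref{prop:coherent_equivalent} guarantees $R_1\cap K$ is finitely generated so that \cite{RR94} applies again). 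Without this input, or an equivalent substitute, your verification of hypothesis (iv) is incomplete and the proof does not go through.
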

\begin{proof}
%First observe that $\widehat{\F_p[F]}\cong \F_p[[\widehat F]]$. 	
 The ring $\F_p[F]$ is a fir (\cite{Be74}).
 Thus, it is also coherent.
 We claim that every finitely presented $\F_p[F]$-module is residually finite. 
 If this holds, then, observing that  $\widehat{\F_p[F]}\cong \F_p[[\widehat F]]$, Theorem \ref{teo:Rhatflat_iff_Rcoherent} implies that $\F_p[[\widehat{F}]]$ is a flat left $\F_p[F]$-module. 
 
 To show the claim, write $R:=\F_p[F]$, let $M\cong R^n/I$ be a finitely presented right $R$-module, with $n\in \N$ and $I$  finitely generated. 
 We wish to show that given any $z\in R^n\setminus I$ there is a right submodule $L\supseteq I$ such that $z\notin L$ and $R^n/L$ is finite. 
 We proceed by induction on $n$. 
 If $n=1$, then \cite[Theorem 4.3]{RR94} implies that for any $z\in R \setminus I$ there is a finitely generated ideal $L\supseteq I$ such that $z\notin L$ and $R/L$ is finite.

 Suppose the claim true for $n-1\geq 1$ and let $z\in R^n \setminus I$. 
 Denote by $R_1$ the first summand in $R^n$.
 If $z\notin R_1+I$, we consider the quotient $R^n/(R_1+I)\cong R^{n-1}/I_1$ where $R^{n-1}=R^n/R_1$ and $I_1=R_1+I/R_1$. 
 Since $R_1$ and $I$ are finitely generated right $R$-modules,  $I_1$ is a finitely generated right submodule of $R^{n-1}$ that does not contain the image $z_1$ of $z$ in $R^{n-1}$. 
 By inductive hypothesis, there is a finitely generated right submodule $L_1\supseteq I_1$ of $R^{n-1}$ such that $z_1\notin L_1$ and $R^{n-1}/L_1$ is finite. 
 The preimage of $L_1$ in $R^n$ is the desired right submodule of $R^n$. 
 If $z\in R_1+I$, consider its image $z_1$ under the isomorphism $R_1+I/I\cong R_1/R_1\cap I$, so $z_1\notin R_1\cap I$. 
 Since $\F_p[F]$ is coherent, Proposition \ref{prop:coherent_equivalent} guarantees that $R_1\cap I$ is a finitely generated right ideal of $R_1$. 
 Theorem 4.3 of \cite{RR94} then gives a finitely generated right ideal $L_1\supseteq R_1\cap I$ of $R_1$ not containing $z_1$ and such that $R_1/L_1$ is finite. 
 The preimage of $L_1$ in $R_1+I$ is our desired $L$. 
 %
% The claim then follows by induction.	
\end{proof}

%\begin{definition}
%	Let $R$ be a ring and $M$ a right $R$-module. Say that $M$ is \textbf{semifree} if every finitely generated submodule is free. 
%\end{definition}

\begin{pro}[See Proposition 1.4.5 in \cite{CoFR}]\label{prop:flat iff all fg submodules free}
	Let $R$ be a fir and $M$ a left $R$-module.
	Then $M$ is flat if and only if every finitely generated left submodule of $M$ is free.
\end{pro}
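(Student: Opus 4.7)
The proof divides into two rather different arguments.

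The direction ``every finitely generated submodule of $M$ is free implies $M$ is flat'' is immediate: $M$ is the directed union of its finitely generated submodules, each of which is, by hypothesis, free and therefore flat; since a directed colimit of flat modules is flat, $M$ is flat.

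For the converse, I plan to use the defining property of $R$ being a fir---every submodule of a free left $R$-module is free---in two steps. First, since every left ideal of $R$ is free and hence projective, $R$ is left hereditary and its left global dimension is at most one. Applied to the long exact sequence of $\Tor$ attached to $0 \to N \to M \to M/N \to 0$, and using that $\Tor_1(Q,M) = 0$ for every right module $Q$ (by flatness of $M$) together with $\Tor_n(Q, M/N) = 0$ for $n \geq 2$ (by the bound on global dimension), one deduces that the finitely generated submodule $N$ is itself flat.

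The second step, which is the main technical obstacle, is to show that a finitely generated flat left module $N$ over a fir is free. By the Govorov--Lazard theorem, $N$ is a directed colimit of finitely generated free modules $F_i$. Since $R^k$ is finitely presented, any surjection $R^k \twoheadrightarrow N$ factors through some $F_{i_0}$, and the image $N_0$ of this factorization in $F_{i_0}$ is a finitely generated submodule of a free module, hence free by the fir hypothesis; the kernel $K$ of $N_0 \twoheadrightarrow N$ is likewise free for the same reason. Flatness of $N$ makes the short exact sequence $0 \to K \to N_0 \to N \to 0$ pure, and the full strength of the fir structure (used systematically in Cohn's book) then forces this pure sequence to split, realising $N$ as a direct summand of $N_0$, hence as a submodule of a finitely generated free module, and therefore itself free.
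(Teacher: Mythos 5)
Your easy direction and your first reduction are fine: a directed union of free modules is flat, and since a fir is left hereditary, the long exact sequence for $\Tor$ together with the bound on global dimension does show that a finitely generated submodule $N$ of a flat module is itself flat. The construction of the exact sequence $0\to K\to N_0\to N\to 0$, with $N_0$ finitely generated free and $K$ free, is also correct (over a fir \emph{all} submodules of free modules are free, so $K$ need not be finitely generated for that conclusion).

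The gap is the final sentence. "Flatness of $N$ makes the sequence pure" is true but carries no information beyond the flatness of $N$, and the assertion that "the full strength of the fir structure then forces this pure sequence to split" is precisely the statement that a finitely generated flat module over a fir is projective (equivalently finitely presented, equivalently free) --- which is the entire content of the hard direction, so the argument is circular at its crux. A pure exact sequence is guaranteed to split when its quotient is finitely presented, i.e.\ when $K$ is finitely generated; but $K$ may a priori be free of infinite rank (finitely generated free modules over a fir, e.g.\ over a free associative algebra, do contain free submodules of infinite rank), so nothing you have written produces the splitting. The paper imports this proposition from Cohn, whose proof supplies exactly the missing ingredient and is of a different nature: over a (semi)fir every relation $\sum a_iu_i=0$ among elements of a flat module can be trivialized, from which one deduces that an $n$-generator submodule of a flat module is either free on its $n$ generators or contained in an $(n-1)$-generator submodule; iterating, every finitely generated submodule lies inside a finitely generated free submodule and is therefore free. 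Some such relation-trivialization (law of nullity) argument must appear; it cannot be replaced by the purity formalism.
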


Putting together the last two results yields the following:

\begin{cor}\label{locfree}
	Let $F$ be a finitely generated  free group.
	Then every finitely generated submodule of the left $\F_p[F]$-module $\F_p[[\widehat F]]$ is free.
\end{cor}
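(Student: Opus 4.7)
The plan is very short: this corollary is essentially an immediate consequence of the two results that precede it in the section, so my proof will just be a careful concatenation.

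First, I will recall that $\F_p[F]$ is a fir, by Bergman's theorem that was cited earlier in the paper (reference \cite{Be74} in the previous proposition). This sets the stage for applying Proposition \ref{prop:flat iff all fg submodules free} (Cohn's criterion), which characterises flat modules over a fir as exactly those modules all of whose finitely generated submodules are free.

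Next, the preceding proposition in this section establishes that $\F_p[[\widehat F]]$ is a flat left $\F_p[F]$-module. Combining this with Cohn's criterion applied to $R=\F_p[F]$ and $M=\F_p[[\widehat F]]$ gives directly that every finitely generated left $\F_p[F]$-submodule of $\F_p[[\widehat F]]$ is free, which is the statement of the corollary.

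There is no obstacle here; the real work has already been done in the previous two propositions (the flatness of $\F_p[[\widehat F]]$ via Theorem \ref{teo:Rhatflat_iff_Rcoherent} and the residual finiteness of finitely presented $\F_p[F]$-modules via \cite{RR94}, and the fir-theoretic characterisation of flatness from \cite{CoFR}). The corollary is just naming the conclusion that will be needed in Section \ref{sec:Dicksproof}, namely that one can approximate $\F_p[[\widehat F]]$ by finitely generated free $\F_p[F]$-submodules, which gives the ``direct union of projective modules'' phrasing promised in the introduction.
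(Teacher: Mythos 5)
Your proof is correct and is exactly the paper's argument: the corollary is obtained by combining the flatness of $\F_p[[\widehat F]]$ as a left $\F_p[F]$-module with Cohn's characterisation of flat modules over a fir (Proposition \ref{prop:flat iff all fg submodules free}), using that $\F_p[F]$ is a fir by Bergman's theorem. Nothing is missing.
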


%\begin{proof}
%	Since  $\F_p[F]$ is a fir (\cite{Be74}),  any flat $\F_p[F]$-module satisfies the condition in the statement (\cite[Proposition 1.4.5]{CoFR}. The claim follows as $\F_p[[\widehat F]]$ is flat.	
%\end{proof}

\begin{cor}\label{locvfree}
	Let $G$ be a finitely generated virtually free group.
	 Then the left $\F_p[G]$-module $\F_p[[\widehat G]]$ is isomorphic to a direct union of free left $\F_p[G]$-modules.
\end{cor}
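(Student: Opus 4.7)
The plan is to reduce the virtually free case to the free case handled by Corollary \ref{locfree}, by descending to a finite-index normal free subgroup and exploiting that $\F_p[G]$ is free of rank $[G:F]$ as a (left and right) $\F_p[F]$-module.

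First, I would fix a normal finite-index subgroup $F \unlhd G$ with $F$ free of finite rank (which exists since $G$ is finitely generated and virtually free, after replacing $F$ by the intersection of its conjugates), set $n = [G:F]$, and choose coset representatives $g_1, \ldots, g_n \in G$ for $G/F$. Since $F$ has finite index in the finitely generated group $G$, its closure $\overline F$ in $\widehat G$ is open of index $n$ and the natural map $\widehat F \to \overline F$ is an isomorphism; the same $g_i$ then serve as coset representatives for $\widehat G / \overline F$. A straightforward limit computation (starting from the $\F_p[\widehat G / U]$ for small open normal $U$) yields the left $\F_p[F]$-module decomposition
$$\F_p[[\widehat G]] \;=\; \bigoplus_{i=1}^n g_i\, \F_p[[\widehat F]].$$

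By Corollary \ref{locfree}, every finitely generated $\F_p[F]$-submodule of $\F_p[[\widehat F]]$ is free, so $\F_p[[\widehat F]]$ is the directed union $\bigcup_\alpha M_\alpha$ of its finitely generated, hence free, $\F_p[F]$-submodules. I would then set
$$N_\alpha \;:=\; \bigoplus_{i=1}^n g_i M_\alpha \;\subseteq\; \F_p[[\widehat G]]$$
and verify that each $N_\alpha$ is in fact an $\F_p[G]$-submodule. Normality of $F$ enters here: for $g \in G$, writing $g g_i = g_{\sigma(i)} f_i$ with $\sigma$ a permutation of $\{1,\dots,n\}$ and $f_i \in F$ gives $g \cdot (g_i m) = g_{\sigma(i)} (f_i m) \in g_{\sigma(i)} M_\alpha$, since $M_\alpha$ is $\F_p[F]$-stable.

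The substantive step is to identify each $N_\alpha$ as a \emph{free} $\F_p[G]$-module. For this I would use the multiplication map $\F_p[G] \otimes_{\F_p[F]} M_\alpha \to N_\alpha$: the right $\F_p[F]$-module decomposition $\F_p[G] = \bigoplus_i g_i \F_p[F]$ makes both sides identify naturally with $\bigoplus_i g_i M_\alpha$, so the map is an $\F_p[G]$-linear isomorphism. Since $M_\alpha \cong \F_p[F]^{r_\alpha}$ by Corollary \ref{locfree}, this gives $N_\alpha \cong \F_p[G]^{r_\alpha}$, a finitely generated free $\F_p[G]$-module. Directedness of $\{M_\alpha\}$ transfers to $\{N_\alpha\}$, and the latter exhausts $\F_p[[\widehat G]]$. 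There is no real obstacle beyond these verifications; the serious input — that $\F_p[[\widehat F]]$ is a directed union of free $\F_p[F]$-modules — is exactly what Corollary \ref{locfree} provides, and the passage from $F$ to $G$ is a formal consequence of the free module structure of $\F_p[G]$ over $\F_p[F]$.
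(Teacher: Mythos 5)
Your proof is correct and is essentially the paper's argument: the paper simply writes $\F_p[[\widehat G]]\cong \F_p[G]\otimes_{\F_p[F]}\F_p[[\overline F]]$, invokes Corollary \ref{locfree}, and uses that tensoring with the free right $\F_p[F]$-module $\F_p[G]$ commutes with direct unions. Your coset-by-coset construction of the submodules $N_\alpha$ (with $F$ taken normal so they are genuine $\F_p[G]$-submodules) is just a concrete unwinding of that same tensor-product identification.
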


\begin{proof}
%	If  $\F_p[[\widehat G]]/\F_p[G]$ is a direct union of free $\F_p[G]$-modules, then clearly $\F_p[[\widehat G]]$ is also a direct union of free $\F_p[G]$-modules. Thus, it is enough to prove the proposition for  $\F_p[[\widehat G]]/\F_p[G]$.

Let $F$ be a free subgroup of $G$ of finite index and denote by  $\overline F$ the closure of $F$ in $\widehat G$.
Then $\overline F\cong \widehat F$ because $F$ is of finite index in $G$.
Now, $\F_p[[\widehat G]]\cong  \F_p[G]\otimes_{\F_p[F]}\F_p[[\overline F]]$. 
By Corollary \ref{locfree},  $\F_p[[\widehat F]]=\F_p[[\bar F]]$ is isomorphic to a direct union of free $\F_p[F]$-modules, therefore, as tensor products commute with direct unions,  $\F_p[[\widehat G]]\cong  \F_p[G]\otimes_{\F_p[F]}\F_p[[\overline F]]$ is isomorphic to a direct union of free $\F_p[G]$-modules.
\end{proof}

\begin{cor} \label{cor:completedgroupalg=directunionprojectives}
	Let $G$ be a finitely generated virtually free group. 
	Let $n$ be a product of a finite set of different primes and $k=\Z/n\Z$.
	Then the $k[G]$-module  $k[[\widehat G]]$ is isomorphic to a direct union of projective $k[G]$-modules.
\end{cor}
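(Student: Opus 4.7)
The plan is to reduce everything to Corollary \ref{locvfree} via the Chinese Remainder Theorem, since the corollary is really just a bookkeeping statement on top of it. Because $n$ is a product of distinct primes $p_1,\dots,p_r$, we have a ring decomposition $k=\Z/n\Z\cong \prod_{i=1}^r \F_{p_i}$, which lifts to $k[G]\cong\bigoplus_{i=1}^r \F_{p_i}[G]$. Since inverse limits commute with finite direct sums, this also gives
\[
k[[\widehat G]]\cong \bigoplus_{i=1}^r \F_{p_i}[[\widehat G]]
\]
as $k[G]$-modules (each summand carrying the $k[G]$-action through the corresponding projection $k[G]\twoheadrightarrow \F_{p_i}[G]$).

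The key observation is that, for each $i$, the ring $\F_{p_i}[G]$ is cut out by an idempotent $e_i\in k$, so $\F_{p_i}[G]$ is a direct summand of $k[G]$ as a $k[G]$-module, hence projective. Consequently, every free $\F_{p_i}[G]$-module is projective as a $k[G]$-module (via the canonical projection).

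By Corollary \ref{locvfree}, each $\F_{p_i}[[\widehat G]]$ is a direct union of free $\F_{p_i}[G]$-modules $F_{i,\lambda}$, $\lambda\in\Lambda_i$, with directed index set. Each $F_{i,\lambda}$ is therefore a projective $k[G]$-module. Finally, indexing by the directed product $\prod_i\Lambda_i$ with the componentwise order, we get
\[
k[[\widehat G]]\cong \bigoplus_{i=1}^r\;\varinjlim_{\lambda_i\in\Lambda_i}F_{i,\lambda_i}\cong \varinjlim_{(\lambda_1,\dots,\lambda_r)}\bigoplus_{i=1}^r F_{i,\lambda_i},
\]
and each finite direct sum $\bigoplus_i F_{i,\lambda_i}$ is a projective $k[G]$-module. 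This exhibits $k[[\widehat G]]$ as a direct union of projective $k[G]$-modules, completing the proof. There is no real obstacle here beyond checking that the CRT decomposition is compatible with the $k[G]$-module structures on $k[[\widehat G]]$ on the one hand and on each $\F_{p_i}[[\widehat G]]$ on the other; this is immediate from the fact that the idempotents $e_i$ are central and fixed by $G$.
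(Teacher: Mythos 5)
Your proof is correct and follows exactly the route the paper intends: the corollary is stated without proof as an immediate consequence of Corollary \ref{locvfree}, via the CRT decomposition $k[G]\cong\bigoplus_i\F_{p_i}[G]$ and the observation that free $\F_{p_i}[G]$-modules are projective over $k[G]$. Your bookkeeping with the product of directed index sets is the standard way to make the ``direct union'' statement precise.
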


\section{Free factors of virtually free groups}\label{sec:Dicksproof}

This section is devoted to the proof of the equivalence of items a) and c) in Theorem \ref{teo:vfree}. 
Recall that this equivalence is the following:

\begin{teo}\label{teo:profinite_freefactor_iff_abstract_freefactor}
Let $G$ be a finitely generated group that is virtually free and let $H\leq G$ be finitely generated. Denote by $\overline{H}$ the closure of $H$ in the profinite completion $\widehat{G}$ of $G$. Then  $\widehat{G}$ splits as a free profinite product $\widehat{G}=\overline{H}\coprod K$ if and only if $G=H\ast L$ splits as an abstract free product. 
\end{teo}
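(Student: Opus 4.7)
The ``if'' direction is immediate from the universal property of free profinite products, since any two homomorphisms from $H$ and $L$ to a finite group combine uniquely into a homomorphism from $G = H \ast L$, yielding $\widehat{G} \cong \overline{H} \coprod \overline{L}$.

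For the converse, the plan is to realise the profinite free product decomposition as a derivation, transport it down to $G$, and apply a classical result of Dicks. Assume $\widehat{G} = \overline{H} \coprod K$, and let $k = \Z/n\Z$ with $n$ a square-free positive integer to be fixed later. By Lemma \ref{lem:freeprofprod_implies_idealsdecompose} the augmentation ideal of $k[[\widehat{G}]]$ decomposes as a left $k[[\widehat{G}]]$-module
$$I_{k[[\widehat{G}]]} = I_{k[[\overline{H}]]}^{\widehat{G}} \oplus I_{k[[K]]}^{\widehat{G}}.$$
Composing $g \mapsto g-1$ with the projection onto the second summand gives a continuous derivation $d \colon \widehat{G} \to k[[\widehat{G}]]$ with kernel exactly $\overline{H}$ (via the standard identification $k[[\widehat{G}]]/I_{k[[\overline{H}]]}^{\widehat{G}} \cong k[[\widehat{G}/\overline{H}]]$). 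Since $G$ is finitely generated, $d(G)$ sits in a finitely generated $k[G]$-submodule of $k[[\widehat{G}]]$, and by Corollary \ref{cor:completedgroupalg=directunionprojectives} this submodule is contained in some finitely generated projective $k[G]$-module $M \subseteq k[[\widehat{G}]]$. The restriction $d|_G \colon G \to M$ is then a derivation into a finitely generated projective $k[G]$-module with kernel $G \cap \overline{H} = H$; the last equality uses that virtually free groups are subgroup separable, so the finitely generated $H$ is closed in the profinite topology of $G$.

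At this point I would invoke Dicks's theorem \cite{Di81} (after Dunwoody): the existence of such a derivation forces $G$ to decompose as the fundamental group of a finite graph of groups in which $H$ appears as a vertex group and every edge group is a finite subgroup of $G$ whose order is invertible in $k$, i.e.\ coprime to $n$. Since $G$ is virtually free, it has only finitely many conjugacy classes of finite subgroups and hence only finitely many primes $p_1,\dots,p_r$ divide the order of any non-trivial finite subgroup of $G$. Taking $n = p_1\cdots p_r$, the coprimality condition on the edge group orders forces them to be trivial. Consequently $G$ is the free product of the vertex groups of the decomposition, and $H$ is a free factor.

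The main obstacle I anticipate is the careful invocation of Dicks's theorem, namely: ensuring that the derivation $d|_G$ really produces (via the Dicks--Dunwoody tree construction) a Bass--Serre decomposition of $G$ in which $H$ is itself a vertex stabiliser rather than merely contained in one, and verifying that the precise coprimality constraint on the edge stabiliser orders coming from the projectivity of $M$ over $k[G]$ is the one indicated. The rest of the argument is essentially bookkeeping once the right projective module and derivation have been constructed.
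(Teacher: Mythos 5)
Your ``if'' direction and the first half of the ``only if'' direction are exactly the paper's: decompose $I_{k[[\widehat G]]}$ via Lemma \ref{lem:freeprofprod_implies_idealsdecompose}, restrict the projection to get a derivation $d\colon G\to P$ into a finitely generated projective $k[G]$-module with kernel $G\cap\overline H=H$ (using Corollary \ref{cor:completedgroupalg=directunionprojectives} and subgroup separability), and feed this into Dicks's theorem. The gap is precisely at the point you flag as ``the main obstacle'': you attribute to Dicks's theorem the conclusion that every edge group has order \emph{invertible in $k$}, and your entire endgame rests on that. The derivation version of the theorem (Theorem III.4.6 of \cite{Dicks_book}, the one that actually applies here) concludes only that the edge groups are \emph{finite} and that $d$ is inner on each vertex group --- and inner-ness of a derivation into a projective module on a finite subgroup is automatic ($H^1(E,P)=0$ for $E$ finite and $P$ a summand of a free module), so it carries no invertibility information by itself. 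The invertibility-in-$k$ conclusion belongs to Dicks's theorem on splitting the augmentation ideal $I_{k[G]}$ itself as a direct sum, which is not available here: what decomposes is $I_{k[[\widehat G]]}$ as a $k[[\widehat G]]$-module, and this does not induce a direct sum decomposition of $I_{k[G]}$ (the two summands need not intersect $I_{k[G]}$ in complementary submodules).

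Extracting the arithmetic constraint is in fact the hard part of the proof, and it is only obtained for the edge groups \emph{incident to the vertex $H$} (which suffices, by Remark \ref{rem:suff_cond_free_factor}), not for all edge groups. Concretely, the paper reduces to the case where $H$ is finite by killing $H\cap U$, uses the inner-ness of $d$ on a conjugate $L$ of an infinite vertex group to produce an $L$-fixed point of $k[[\widehat{G'}/H']]$, invokes Lemma \ref{lem:no_fixed_points_modH} to force that fixed point to be trivial, and then runs a coefficient-counting argument in a finite quotient to conclude that $|H'\cap L|$ is invertible in $k$ --- only then does your choice of $n$ kick in. Finite vertex groups and conjugates of $H$ itself require separate arguments via profinite Bass--Serre theory (\cite[Theorem 9.5.1]{RibesZalesskii}, \cite[Theorem 9.1.12]{RibesZalesskii}). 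So your outline identifies the right reduction but leaves unproved the step that the whole theorem turns on; as written, the appeal to a coprimality clause in Dicks's theorem is not justified.
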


The ``if" part follows from the facts that ``profinite completion commutes with free product"; i.e., $\widehat{H}\coprod \widehat{L}=\widehat{H\ast L}$ and that the profinite topology of $H\ast L$ induces the full profinite topologies on $H$ and $L$ (see \cite[Corollary 3.1.6]{RibesZalesskii}).

The rest of this section is devoted to the ``only if" implication. 
It relies on a decomposition theorem due to  Dicks and Dunwoody (\cite{Di81,Dicks_book}).
To set notation, we recall the relevant Bass--Serre theory, mostly following \cite{Dicks_book}. 

\subsection{Prerequisites from Bass--Serre Theory}

A \textbf{graph} $Y=V(Y) \sqcup E(Y)$ is the disjoint union of a set $V=V(Y)$ of \textbf{vertices} and a set $E=E(Y)$ of \textbf{edges}, given with two maps  $i,t:E\rightarrow V$ that associate to each edge its \textbf{initial} $i(e)$, respectively, \textbf{terminal} $t(e)$ vertex. 
Loops ($i(e)=t(e)$) are allowed. 
In other words, we consider directed multigraphs. 
$Y$ is \textbf{connected} if it is connected as an undirected graph. 
A \textbf{tree} is a connected graph which has no undirected cycles.
By Zorn's Lemma, every connected graph has a maximal subtree. 
For our purposes, $Y$ will always be finite. 

A \textbf{graph of groups} $(\G,Y)$ consists of a graph $Y$, vertex groups $\{\G(v) : v\in V(Y)\}$ and edge groups $\{\G(e) : e\in E(Y)\}$ and, for each $e\in E(Y)$, two group morphisms $i_e, t_e: \G(e)\rightarrow \G(i(e)), \G(t(e))$. 
If all the morphisms $i_e$,$t_e$, $e\in E(Y)$ are injective, the graph of groups is called \textbf{faithful.} 

Given a connected graph of groups $(\G,Y)$ and a maximal subtree $T$ of $Y$, the \textbf{fundamental group} $G=\pi(\G,Y,T)$ of $(\G,Y)$ with respect to $T$ is the group with presentation:
 \begin{itemize}
 	\item generators $\{\G(v), q_e: v\in V(Y), e\in E(Y)\}$;
 	\item relations of each $\G(v), v\in V(Y)$;
 	\item relations $(i_e(g))^{q_e}=t_e(g)$ for each $g\in \G(e), e\in E(Y)$;
 	\item relations $q_e=1$ if $e\in E(T)$ 	
\end{itemize}

One of the first results of Bass--Serre theory (see \cite[II.2.4]{Dicks_book}, \cite[I.5.20]{Serre_trees}) is that any other maximal subtree yields an isomorphic fundamental group. 
Another result (\cite[II.2.7]{Dicks_book}) states that if $(\G,Y)$ is connected and faithful, then the canonical maps  $\G(v)\rightarrow G, v\in V(Y)$ are injective  and we may identify each $\G(v)$ with its image.

\begin{rem}\label{rem:edge_groups_are_not_vertex_groups}
	Let $(\G,Y)$ be a connected faithful graph of groups with maximal subtree $T$. 
	Suppose there is some edge $e$ in $T$ with $i(e)=v, t(e)=w$ and such that one of $i_e$ or $t_e$ is an isomorphism. 
	Without loss of generality, assume that $i_e$ is an isomorphism, so that we may identify $\G_e=\G_v$. 
	Then, in the fundamental group $\pi(\G,Y,T)$ we have that $(\G_v)^{q_e}=\G_v=t_e(\G_v)\leq \G_w$. 
	
	Consider the graph $Y'$ and maximal tree $T'$ obtained from $Y$ by identifying $v$ and $w$ along $e$ (leaving the other possible edges between $v$ and $w$ as loops). 
	Then $\pi(\G,Y,T)=\pi(\G,Y',T')$ because $\G_v\leq \G_w$ in $\pi(\G,Y,T)$. 
	
	Doing this procedure for all edges $e$ of $T$ as above, we may assume that if $G=\pi(\G,Y,T)$, then for every edge $e$ of $T$, neither $i_e$ nor $t_e$ are isomorphisms. 
	
	Since choosing a different maximal subtree does not change the isomorphism type of the group, we may further assume that $G$ is the fundamental group of a connected faithful graph of groups such that for every edge $e$ joining different vertices neither $i_e$ nor $t_e$ are isomorphisms. 
	%
	%
	%We could still have a Baumslag-Solitar type of situtation where the loop in the graph of groups is never included in a maximal subtree and induces an isomorphism of the vertex group with a subgroup of itself.  
\end{rem}

Let $G$ act on a tree $X$ and let $S\subset X$ be a \textbf{connected transversal} for the $G$-action.
This is a subset of $X$ (in general, not a subgraph%, as the terminal map is not defined for all edges
) that is in bijection with $G\backslash X$, any two of whose vertices are connected by a path in $S$ and such that $i(e)\in S$ for every $e\in S$.  
For each $e\in E(S)$, there is a unique vertex of $S$ in the same orbit as $t(e)$, so we choose $q_e\in G$ such that $q_e\cdot t(e) \in S$, taking $q_e=1$ if $t(e)\in S$.
The collection $(q_e : e\in E(S))$ is a \textbf{connecting family} for $S$. 
For each $s\in S$, denote by $G_s$ its stabiliser in $G$. 
Note that $G_{q_e\cdot t(e)}=q_eG_{t(e)}q_e^{-1}$ and that for every $e\in E(S)$, $G_e=G_{i(e)}\cap G_{t(e)}=G_{i(e)}\cap q_e^{-1}G_uq_e$ where $u=q_e\cdot t(e)$, so that $G_e^{q_e}\leq G_{q_e\cdot t(e)}$. 
These are exactly the conjugation relations that appear in the definition of the fundamental group of a graph of groups. 

Let $(\G,Y)$ be a connected graph of groups with maximal subtree $T$. 
Write $\pi=\pi(\G,Y,T)$. 
The \textbf{standard tree} of $(\G,Y,T)$ is the graph $\tilde{X}=\tilde{X}(\G,Y,T)$ with, respectively, vertex and edge sets $$V(\tilde{X})=\bigsqcup_{v\in V(Y)}\pi/\G(v), \qquad E(\tilde{X})=\bigsqcup_{e\in E(Y)}\pi/G_{e}$$
and incidence maps given by $$i(g\G(e))=g\G(i(e)), \quad t(g\G(e))=gq_e\G(t(e)).$$ 
That $\tilde{X}$ is indeed a tree is a key result of Bass--Serre theory (\cite[I.5.3]{Dicks_book}, \cite[I.5.12]{Serre_trees}).

\begin{teo}[Structure Theorem of Bass--Serre theory]
 Let $G$ act on a connected graph $X$ with connected transversal $ S\subset X$, maximal subtree $T\subset S$ and connecting family $(q_e\in G: e\in E(S))$. 
 Then there is a faithful connected graph of groups $(\G, G\backslash X)$ defined by $\G(s)=G_s$ for $s\in S$ and $i_e, t_e: G_e\rightarrow G_{i(e)}, G_{(q_e\cdot t(e))}$ given, respectively,  by $g \mapsto g, g^{q_e}$, for each $e\in E(S)$. 
 
 There is a surjective homomorphism $\varphi:\pi=\pi(\G,G\backslash X, T) \rightarrow G$, uniquely determined by the inclusion maps $\G(v)\rightarrow G$ and $q_e\rightarrow q_e$, $v\in V(Y), e\in E(S)$. 
 
 There is an equivariant universal covering map from the standard tree $\tilde{X}(\G,G\backslash X, T)$ to $X$, given by $g\G(s)\mapsto \varphi(g)\cdot s, g\in\pi, s\in S$.
 
 The map $\tilde{X}\rightarrow X$ is bijective if and only if $\varphi$ is an isomorphism. 
\end{teo}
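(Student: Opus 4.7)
The plan is to prove the three assertions in sequence, invoking the earlier-cited fact that the standard tree $\tilde X(\G,Y,T)$ is genuinely a tree. First I would verify that $(\G, G\backslash X)$ is a well-defined faithful graph of groups: identifying $G\backslash X$ with $S$ via the transversal, for each $e \in E(S)$ the stabiliser $G_e$ sits inside $G_{i(e)}$ (giving the inclusion $i_e$), while conjugation by $q_e$ sends $G_{t(e)}$ onto $G_{q_e \cdot t(e)}$, so $t_e : g \mapsto g^{q_e}$ embeds $G_e$ into $G_{q_e \cdot t(e)}$. Both maps are injective, so the graph of groups is faithful.

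Next I would construct $\varphi : \pi \to G$ by inspecting the presentation of $\pi$: it suffices to verify that sending each $g \in \G(v)$ to itself in $G$ and each generator $q_e$ of $\pi$ to the corresponding element $q_e \in G$ respects all defining relations. Vertex-group relations are tautological; the edge relation $(i_e(g))^{q_e} = t_e(g)$ reduces to $g^{q_e} = g^{q_e}$; and for $e \in E(T)$ the relation $q_e = 1$ is consistent because $t(e) \in T \subseteq S$ permits the choice $q_e = 1 \in G$. Surjectivity of $\varphi$ follows from a path-tracking argument using connectedness of $S$ and $X$: given $g \in G$ and a base vertex $v_0 \in S$, decompose a path in $X$ from $v_0$ to $g \cdot v_0$ edge by edge; each time the path leaves $S$, translate back by a suitable $q_e$, accumulating elements of vertex stabilisers along the way to express $g$ as a word in the image of $\varphi$.

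Then I would define the graph morphism $p : \tilde{X} \to X$ by $g\G(v) \mapsto \varphi(g) \cdot v$ on vertices and $g\G(e) \mapsto \varphi(g) \cdot e$ on edges. Well-definedness follows because $\G(s)$ maps under $\varphi$ into $G_s$; incidence is preserved because the same $q_e$'s appear in the terminal map of $\tilde X$ and in the definition of $\varphi$; equivariance with respect to $\varphi$ is tautological. For the covering property one checks local bijectivity at each vertex $\G(v)$ of $\tilde X$, which reduces to matching the $G_v$-orbit structure of the star of $v$ in $X$ with the appropriate cosets in $\tilde X$ at $\G(v)$.

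Finally, for the bijectivity criterion: if $\varphi$ is an isomorphism, then injectivity of $p$ is immediate from the identification $\G(v) = G_v \leq G$; conversely, if $p$ is bijective, then any $g \in \ker \varphi$ fixes every vertex of $\tilde X$, and invoking the standard fact that $\pi$ acts faithfully on $\tilde X$ when the graph of groups is faithful forces $\ker \varphi = 1$. The main obstacle I anticipate is the local-bijectivity step in the covering argument, together with the path-combinatorial bookkeeping for surjectivity of $\varphi$; both require carefully matching edges of $X$ at a vertex $v$ with edges of $\tilde X$ at the lift $\G(v)$ using the transversal $S$ and the connecting family $(q_e)$.
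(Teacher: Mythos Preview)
The paper does not prove this theorem; it is quoted without proof as standard background from Bass--Serre theory, with the references \cite{Dicks_book} and \cite{Serre_trees} implicit in the surrounding discussion. Your sketch follows the usual line of argument one finds in those sources (check the graph of groups is faithful, build $\varphi$ from the presentation, define the covering map via $\varphi$, and deduce the bijectivity criterion from faithfulness of the $\pi$-action on $\tilde X$), so there is nothing to compare against here.
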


%\noindent\textbf{Notation.}
 Let $G=\pi(\G,Y,T)$ be the fundamental group of a connected faithful graph of groups, with respect to a maximal subtree $T$. 
Since we can identify each $\G(v), v\in V(Y)$ with its image in $G$, we write $G_y:=\G(y)$ for each $y\in Y$. 
For every $e\in E(Y)$, write $i(G_e)\leq G_{i(e)}$ and $t(G_e)\leq G_{t(e)}$ for the respective images of $G_e$ under $i_e, t_e$. 
Conjugating by $q_e$ induces an isomorphism $i(G_e)\rightarrow t(G_e)$, which is the identity if $e\in T$, in which case we identify $G_e$ with $i(G_e)=t(G_e)$.
\begin{pro}[Proposition II.3.1 of \cite{Dicks_book}]\label{pro:intersections_of_vertex_groups}
	With the above notation, $\G_{i(e)}\cap \G_{t(e)}^{q_e}=i_e(\G(e))$ for any $e\in E(Y)$.	
	Given any $v, w\in V(Y)$, let $P$ be the set of edges of $T$ in the geodesic joining $v, w$. Then $G_v\cap G_w=\bigcap_{e\in P}G_e$.  
\end{pro}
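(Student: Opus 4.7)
The plan is to derive both statements from Bass--Serre theory applied to the standard tree $\tilde X = \tilde X(\G,Y,T)$ on which $G = \pi(\G,Y,T)$ acts. Because $(\G,Y)$ is faithful, I identify each $G_v = \G(v)$ with its image in $G$, and each $G_e$ with $i_e(G_e) \leq G_{i(e)}$. By the definition of $\tilde X$ recalled in the paper, the vertex $1\cdot G_v \in \tilde X$ has $G$-stabiliser $G_v$, and the edge $1\cdot G_e \in \tilde X$ has $G$-stabiliser $i_e(G_e)$.

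For the first equality I would invoke the elementary fact that, in any group action on a tree, the stabiliser of an edge equals the intersection of the stabilisers of its two endpoints. The edge $1\cdot G_e$ has initial vertex $1\cdot G_{i(e)}$ and terminal vertex $q_e \cdot G_{t(e)}$ by the definition of $\tilde X$, whose stabilisers are $G_{i(e)}$ and $q_e G_{t(e)} q_e^{-1} = G_{t(e)}^{q_e}$ respectively (using the paper's left-conjugation convention). Hence $i_e(\G(e)) = G_{i(e)} \cap G_{t(e)}^{q_e}$.

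For the second equality, let $P = (e_1, \dots, e_n)$ denote the geodesic in $T$ joining $v$ to $w$. Since $q_e = 1$ for every edge $e \in T$, the edge $1\cdot G_{e_k}$ joins $1\cdot G_{i(e_k)}$ to $1\cdot G_{t(e_k)}$, so the concatenation is a path in $\tilde X$ from $1\cdot G_v$ to $1\cdot G_w$; because $\tilde X$ is a tree, this is the unique geodesic between these two vertices. Any element of $G_v\cap G_w$ fixes both endpoints and hence every edge of this geodesic; conversely, an element fixing every edge fixes the endpoints. Applying the first equality to each $e_k \in T$ (where $G_{t(e_k)}^{q_{e_k}} = G_{t(e_k)}$, so $G_{e_k} = G_{i(e_k)}\cap G_{t(e_k)}$) then gives $G_v \cap G_w = \bigcap_{k=1}^{n} G_{e_k} = \bigcap_{e \in P} G_e$.

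The main obstacle is really bookkeeping rather than conceptual content: one must consistently handle the left-conjugation convention, unambiguously identify each $G_e$ with $i_e(G_e) \leq G$ (for which faithfulness of $(\G,Y)$ is essential to guarantee injectivity of the $i_e$ and of the vertex inclusions), and rely on the Bass--Serre structure theorem asserting that $\tilde X$ is a tree on which $G$ acts with the claimed vertex and edge stabilisers. Once that infrastructure is in place, the two equalities are immediate consequences of the general facts that stabilisers of edges are intersections of stabilisers of endpoints, and that an element fixing two vertices of a tree fixes the unique geodesic between them.
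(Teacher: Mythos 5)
Your proof is correct. The paper does not prove this proposition at all --- it is quoted verbatim from Proposition II.3.1 of Dicks' book --- so there is no internal argument to compare against; your derivation via stabilisers in the standard tree $\tilde X$ (edge stabiliser equals the intersection of the endpoint stabilisers, since a tree has a unique edge between any two vertices and the action preserves orientation; an element fixing $1\cdot\G(v)$ and $1\cdot\G(w)$ fixes the unique geodesic between them pointwise) is the standard one and is sound. The only wrinkle is the one you already flag: the paper's conventions for $(i_e(g))^{q_e}=t_e(g)$ with $B^g=gBg^{-1}$ and for $t(g\G(e))=gq_e\G(t(e))$ are not mutually consistent (one forces $q_e^{-1}i_e(\G(e))q_e\leq\G(t(e))$, the other $q_ei_e(\G(e))q_e^{-1}\leq\G(t(e))$), but this is a defect of the source's bookkeeping, not of your argument, and is immaterial for the edges of $T$ used in the second assertion, where $q_e=1$.
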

	
\begin{rem}\label{rem:suff_cond_free_factor}
Suppose that $G=\pi(\G,Y,T)$ is the fundamental group of some connected faithful graph of groups and that $H\leq G$ is one of the vertex groups. 
$H$ is a free factor of $G$ if all edge groups incident with $H$ have trivial image in $H$. 
By Proposition \ref{pro:intersections_of_vertex_groups}, 
this holds if  $H\cap G_v^g$ is trivial for every vertex group distinct from $H$ and every $g\in G$ and if $H\cap H^g$ is trivial for every $g\in G\setminus H$. 
\end{rem}

The last result that we need is the crucial link between derivations and decompositions into fundamental groups of graphs of groups. 
A derivation $d: G\rightarrow P$ is \textbf{inner} if there is some $p\in P$ such that $d(g)=(1-g)p$ for all $g\in G$. 

\begin{teo}[Theorem III.4.6 of \cite{Dicks_book}]\label{teo:dicks_derivation_to_decomposition}
	Let $G$ be a group and $R$ a ring. 
	Suppose that $P$ is a projective $R[G]$-module, and $d:G \rightarrow P$ is a derivation such that $G$ is generated by $\ker d$ together with finitely many elements. 
	Then $G$ decomposes as the fundamental group of a faithful connected finite graph of groups with finite edge groups, $\ker d$ as one of the vertex groups, and such that the derivation $d$ is inner when restricted to each vertex group. 
\end{teo}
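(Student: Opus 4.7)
My plan is to deduce the structure theorem from Dunwoody's foundational construction that produces a $G$-tree with finite edge stabilizers from a derivation valued in a projective $R[G]$-module, and then to invoke Bass--Serre theory to translate this tree into the desired graph of groups.

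First I would reduce to the case where $P$ is free of finite rank. Since $P$ is projective, it is a direct summand of a free $R[G]$-module $F$, and the composition $G\xrightarrow{d}P\hookrightarrow F$ is a derivation with the same kernel as $d$. Writing $F=\bigoplus_{\alpha}R[G]\,e_\alpha$ and $d(g)=\sum_{\alpha}d_\alpha(g)\,e_\alpha$, each component $d_\alpha\colon G\to R[G]$ is a derivation. Using the hypothesis that $G$ is generated by $\ker d$ and finitely many elements $g_1,\dots,g_n$, and that each $d(g_i)$ has finite support in the direct sum, one sees that $d$ factors through a finite-rank direct summand. Hence we may assume $P=R[G]^m$ and reduce to finitely many component derivations $d_1,\dots,d_m\colon G\to R[G]$.

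The central step is the bijective correspondence between derivations $G\to R[G]$ and $R[G]$-module homomorphisms $I_{R[G]}\to R[G]$ (via $g-1\mapsto d(g)$), under which inner derivations correspond to maps extending to $R[G]$. Each $d_i$ then has finite support on the natural generators of $I_{R[G]}$, and this finite-support data should be interpreted, in Dunwoody's framework, as a finite family of $G$-equivariant tracks in a suitable $G$-complex (for instance the Cayley $2$-complex of a finite generating set of $G$ modulo $\ker d$). The track arrangement has a dual $G$-graph whose vertices are the connected components of the complement of the tracks and whose edges are the track orbits. The heart of the argument, and in my view the principal obstacle, is to verify that this dual graph is actually a \emph{tree} and that the stabilizer of each track is \emph{finite}: this is the essence of Dunwoody's accessibility theorem, and finiteness of edge stabilizers depends crucially on the finite support of elements in $R[G]$, hence on the projective (not merely flat) nature of $P$.

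Granted the $G$-tree $T$, Bass--Serre theory yields the graph of groups $(\G,G\backslash T)$; connectedness of $T$ forces the quotient graph to be connected, and the finite generation of $G$ modulo $\ker d$ ensures that $G\backslash T$ is finite. Since $d$ vanishes on $\ker d$, this subgroup fixes every track pointwise and therefore stabilizes a vertex $v_0$ of $T$ corresponding to a distinguished component in the construction; a careful choice of basepoint identifies $G_{v_0}=\ker d$, placing $\ker d$ as one of the vertex groups. Finally, for an arbitrary vertex $v$ of $T$, the specific component of the complement of tracks associated with $v$ supplies a canonical element $p_v\in P$ (obtained by summing contributions of the tracks on one side of $v$), and the stabilizer $G_v$ satisfies $d(g)=(1-g)p_v$ for all $g\in G_v$ by $G$-equivariance of the track assignment. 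This shows $d$ is inner on each vertex group, completing the proof.
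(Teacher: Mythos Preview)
The paper does not supply a proof of this theorem at all: it is quoted as a black box from Dicks' monograph (Theorem~III.4.6 of \cite{Dicks_book}) and used as an input to the argument in Section~\ref{sec:Dicksproof}. So there is nothing in the paper to compare your proposal against.

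As for the proposal itself, it is an outline with the right shape but not a proof. Your reduction to a free $R[G]$-module of finite rank is correct and is indeed the first move in Dicks' treatment. After that, however, you essentially defer the substance of the argument to ``Dunwoody's framework'' and acknowledge that establishing the tree and the finiteness of edge stabilizers is ``the principal obstacle''---which it is, and you do not resolve it. The actual argument in \cite{Dicks_book} does not pass through tracks in a $2$-complex; it goes through the theory of almost-invariant subsets of $G$ (equivalently, elements of the Boolean ring $\mathcal{P}(G)/\mathcal{P}_{\mathrm{fin}}(G)$) associated to the components $d_i$, and then invokes the Dunwoody structure-tree machinery developed earlier in the book to produce the $G$-tree. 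Two further points are genuinely unjustified in your sketch: first, the assertion that $\ker d$ is \emph{equal to} a vertex stabilizer (not merely contained in one) requires work---in Dicks' argument this comes from the specific way the structure tree is built from the almost-invariant sets, and ``a careful choice of basepoint'' is not enough; second, your formula $d(g)=(1-g)p_v$ on $G_v$ with $p_v$ ``obtained by summing contributions of the tracks on one side of $v$'' is not meaningful without having actually constructed the tree, and in any case the innerness on vertex groups in Dicks' proof comes from a different mechanism (the vertex groups are characterized as those subgroups on which the relevant almost-invariant sets become genuinely invariant).
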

Note that if $d$ is inner when restricted to vertex groups, then $d$ is also inner on conjugates of vertex groups: for every $f\in G$ and $g\in G_v$ we have 
$d(fgf^{-1})=(1-fgf^{-1})d(f)+fd(g)=(1-fgf^{-1})(d(f)+fp).$

\subsection{Proof of Theorem \ref{teo:profinite_freefactor_iff_abstract_freefactor}, continued}
Suppose that $G$ is finitely generated, virtually free and $H\leq G$ satisfies that $\widehat{G}=\overline{H}\coprod K$ for some closed subgroup $K$ of $\widehat{G}$.

Let $U$ be a free, normal subgroup of finite index in $G$ and let $k=\Z/n\Z$ where $n$ is the product of all prime factors of $|G:U|$. 
Lemma \ref{lem:freeprofprod_implies_idealsdecompose} yields a continuous homomorphism $$\phi:I_{k[[\widehat G]]}\rightarrow I_{k[[ K]]}^{\widehat{G}}$$ of $k[[\widehat{G}]]$-modules, which is simply projection to the second component. 
Note that these are also $k[G]$-modules. 
Since $G$ is finitely generated, the image of $\{1-g:g\in G\}\subseteq I_{k[[\widehat G]]}$ under $\phi$ is a finitely generated $k[G]$-module. 
By Corollary \ref{cor:completedgroupalg=directunionprojectives}, this finitely generated $k[G]$-module lies in a direct union of projective $k[G]$-modules (which we can take to be finitely generated, since every module is the direct union of its finitely generated submodules). 
Let $P$ be a projective finitely generated $k[G]$-module containing $\phi(I_{k[ G]})$ and denote  by $\varphi:I_{k[ G]}\rightarrow P$ the restriction of $\phi$ to $I_{k[ G]}$, with image in $P$. 

Then $\varphi$ induces a derivation $$d:G\rightarrow P, \ g\mapsto \varphi(1-g).$$ 
Notice that $\varphi(1-g)=0$ exactly when $g\in \overline{H}\cap G$. 
Since $G$ is virtually free,  all finitely generated subgroups are closed in the profinite topology (that is, $G$ is  LERF), so $\overline{H}\cap G=H$. 
We have thus obtained a derivation $d:G\rightarrow P$ to a projective $k[G]$-module whose kernel is exactly $H$. 
Theorem \ref{teo:dicks_derivation_to_decomposition} then implies that $G$ decomposes as the fundamental group of a faithful connected finite graph of groups $Y=(V,E)$, with finite edge groups, with $H=G_u$ for some $u\in V$ and such that the derivation $d$ is inner when restricted to any conjugate of any vertex group.

Note that by Remark \ref{rem:edge_groups_are_not_vertex_groups}, we can assume that for every $e\in E$ with distinct endpoints $v,w$, neither of the injections $G_e\rightarrow G_v,G_w$ is surjective. 
Following Remark \ref{rem:suff_cond_free_factor}, we will show that $H\cap G_v^g=1$ for every $g\in G$ and $H\cap H^g=1$ for every $g\in G\setminus H$.

% \color{blue} We can modify the graph $Y$ and assume that there is no a vertex $v_0\ne v\in V(Y)$ connected to  $v_0$ by  an edge $e$ and such that $G_e\to G_v$ is an isomorphism. \color{black}
%We will show that  \color{blue}  $G_e=1$ if $e$ is an edge having $v_0$ as one of its verteces\color{black}, which implies that $H$ is a free factor of $G$. %$G=\langle H, (G_v)_v, (q_e)_{e\in E(Y)} \mid relations of H, G_v, finite edge stabs, conjugation by q_e's \rangle= H \ast \langle (G_v)_v, (q_e)\rangle$.

\

\
\noindent\textbf{Case 1: $v\neq u$ and $G_v$ is infinite.}
First we reduce to the case where $H$ is finite. 
Recall that $U$ was taken to be a free, normal subgroup of finite index in $G$. 
Then $H':=H/(H\cap U)$ is finite and, since $H\cap U$ is torsion-free, it intersects trivially all edge groups, so that $G':=G/\langle H\cap U\rangle^G$ has the same presentation as $G$, with the only addition that $H$ is replaced by the finite group $H'$.
In particular, each vertex group $G_v$ distinct from $H$ maps isomorphically to a subgroup of $G'$, which, abusing notation, we shall also call $G_v$.

We then have  $\widehat{G'}=\overline{H'}\coprod K=H'\coprod K$ and so $I_{k[[\widehat{G'}]]}=I_{k[[H']]}^{\widehat{G'}}\oplus I_{k[[K]]}^{\widehat{G'}}$.
This gives the following commutative diagram:
$$\begin{array}[c]{ccccccc}
G & \hookrightarrow & \widehat{G} &\hookrightarrow & I_{k[[\widehat{G}]]} & \stackrel{\phi}{\rightarrow} & I_{k[[K]]}^{\widehat{G}}\\
\downarrow && \downarrow && \downarrow && \downarrow	\\
G' & \hookrightarrow & \widehat{G'} & \hookrightarrow & I_{k[[\widehat{G'}]]} & \stackrel{\phi'}{\rightarrow} & I_{k[[K]]}^{\widehat{G}}
\end{array}$$
where the downward arrows are the quotient maps by the appropriate image of $\langle H\cap U\rangle ^G$. 

Note that $d$ is the composition of all maps on the top row of the diagram. 
Denote by $d'$ the composition of the maps on the bottom row. 
This is in particular a derivation on $G'$, whose kernel is $H'$, and that is inner on every conjugate of  $G_v\leq G'$ for every $v\in V$.

Fix now some $v\neq u$ such that $G_v$ is infinite and denote by $L$ any conjugate of $G_v$. 
Because $d'%:G'\rightarrow I_{k[[\widehat {G^\prime}]]}
$ is inner on $L$, there is some $m\in I_{k[[\widehat {G^\prime}]]} $ such that $d'(g)=(1-g)m$ for every $g\in L$. 
Recalling that $1-g=d'(g)+\left(\text{the }I_{k[H]}^{\widehat{G^\prime}}\text{-part of }1-g\right)$, we have that $1-g \equiv(1-g)m$ modulo $I_{k[H]}^{\widehat{G^\prime}}$ for every $g\in L$. 
Equivalently,  $(1-g)(1-m)\equiv 0$ modulo $I_{k[H^\prime]}^{\widehat{G^\prime}}$ for every $g\in L$, so the image of $1-m$ in $k[[\widehat{G'}/H']]$ is a fixed point of $L$. 
 
Any fixed point of $L$ in $k[[\widehat{G'}/H']]$ is also a fixed point of the closure $\overline{L}$ in $\widehat{G'}$, because $L$ is dense in $\overline{L}$ (as $G'$ is residually finite) and the action of $\widehat{G'}$ on $k[[\widehat{G'}/H']]$ is continuous.

Now,  $\overline{H'}=H'$ is finite, so for every $N\unlhd_o\widehat{G'}$ and $g\in \widehat{G'}$ we have 
$$|\overline{L}:\overline{L}\cap N|=|\overline{L}:\overline{L}\cap (H')^gN|\cdot |\overline{L}\cap (H')^gN: N|\leq |\overline{L}:\overline{L}\cap (H')^gN| \cdot |H'|.$$
Therefore $p^\infty$ divides $\lcm_{N\unlhd_o\widehat{G'}} |\overline{L}:\overline{L}\cap (H')^gN|$ if and only if it divides $\lcm_{N\unlhd_o\widehat{G'}}|\overline{L}:\overline{L}\cap N|=|\overline{L}|$. 

Because $G'$ is LERF, the profinite completion of $L$ coincides with its closure $\overline{L}$ in $\widehat{G'}$. 
As $L$ is virtually torsion-free,   $\overline{L}$ contains a copy of $\widehat{\Z}$; in particular, $p^\infty$  divides $|\overline{L}|$.
Now Lemma \ref{lem:no_fixed_points_modH} implies that $\overline{L}$ has no non-trivial fixed points when it acts on $k[[\widehat{G'}/H']]$. 
In particular, $(1-m)$ is congruent to 0 in $k[[\widehat{G'}/H']]$, which is equivalent to $1-m \in I_{k[H^\prime]}^{\widehat{G^\prime}}$.

Recall that the kernel of $d'$ is $H'$, so $d'(h)=(1-h)m=0$, that is, $hm=m$, for all $h\in L\cap H'$.

Let $G'/N$ be a finite quotient of $G'$ in which $H'$ injects. The image $\widetilde{m}\in k[G'/N]$ of $m$ is a finite sum $\widetilde{m}=\sum_{\gamma\in G'/N}r_{\gamma}\gamma$. 
Since $m\equiv 1$ in $k[[\widehat{G'}/H']]$, the coefficients of $\gamma\in H'N/N\cong H'$ in $\widetilde{m}$ must sum to 1: $\sum_{\gamma\in H'N/N}r_{\gamma}=1$. 
Moreover, because $\widetilde{m}$ is invariant under $H'\cap L$, we must have that $r_{\gamma}=r_{h\gamma}$ for all $h\in H'\cap L$ and $\gamma\in G'/N$. 
In particular, $$1=\sum_{\gamma\in H'N/N}r_{\gamma}=\sum_{\delta\in H'N/(H'\cap L)N}\left(r_{\delta}|H'\cap L|\right),$$
so $|H'\cap L|$ is invertible in $k$. 

By the choice of $k$, the invertible elements are those that are coprime with all primes that divide $|G:U|$. 
But $|H'\cap L|$ divides $|H'|=|HU:U|$, which divides $|G:U|$. 
So $H'\cap L=1$. 

This means that the image of $H\cap L\leq G$ in $G'$ is trivial, which implies that $H\cap L\leq H\cap U$.
 But $U$ is torsion-free, so in fact $H\cap L=1$. 
 
 \

\noindent\textbf{Case 2: $v\ne u$ and $G_v$ is finite.}
%
%If $G_v$ is finite, then it is a closed subgroup of $\widehat{G}$. 
Like in the previous case, denote by $L$ any conjugate of $G_v$ in $G$. 
By \cite[Theorem 9.5.1]{RibesZalesskii}, since $L$ is finite, it is conjugate in $\widehat{G}$ to a subgroup of $\overline{H}$ or $K$. 
If the latter, we are done, as any conjugate of $K$ intersects $\overline{H}$ trivially, by \cite[Corollary 3.1.2]{MelnikovZalesskii}. 
The same result guarantees that a conjugate of $\overline{H}$ distinct from $\overline{H}$ has trivial intersection with $\overline{H}$ (see also \cite[Theorem 9.1.12]{RibesZalesskii}). 
So we are left with the case $L\leq \overline{H}$, which means that $L\leq H$
because $H$ is closed in the profinite topology of $G$.

Suppose that $L=G_v^g$, so that $L$ is the stabiliser of $gv$ in the standard tree.
Let $gw$ be the next vertex in the path between $gv$ and $u$ in the standard tree. 
Since $G_v^g\leq H=G_u$, we also have that $G_v^g\leq G_w^g$ and therefore $G_v\leq G_w$, where inclusion is induced by the edge homomorphisms of the edge $e\in E$ between $v,w\in V$. 
This contradicts the assumption on $G$ that we made following Remark \ref{rem:edge_groups_are_not_vertex_groups}. 

% \color{blue} But this is impossible by our assumption  that there is no a vertex $v_0\ne v\in V(Y)$ connected to  $v_0$ by the edge $e$ and such that $G_e\to G_{v}$ is an isomorphism. \color{black}

\

\

\noindent\textbf{Case 3: $v=u$.}
Here we must show that $H^g\cap H=1$ for every $g\notin H$. 
No such $g$ is contained in $\overline{H}$, as $H=G\cap \overline{H}$. 
By \cite[Theorem 9.1.12]{RibesZalesskii}, this implies that $\overline{H}^g\cap \overline{H}=1$, which finishes the argument.\hfill \qedsymbol

\end{document}